%%% Local Variables: 
%%% mode: latex
%%% TeX-master: preprint.tex
%%% End: 

\documentclass{amsart} %poner draft para ver los too-wides
\usepackage[utf8]{inputenc}
\usepackage{slashed}
\usepackage{textcomp} % provide lots of new symbols
\usepackage{amsmath,amstext,amsopn,amssymb, amsfonts, amsthm, mathtools}  % Better maths support & more symbols
\usepackage{mathrsfs}
\usepackage{graphicx}
\usepackage{xcolor}
\usepackage{geometry} % see geometry.pdf on how to lay out the page. There's lots.
\usepackage{array} % for better arrays (eg matrices) in maths
\usepackage{textcomp} % provide lots of new symbols
\usepackage[all,cmtip]{xy} % conmutative diagrams
\usepackage{bbm}
\usepackage{varioref}
\usepackage{pdfsync}
%\usepackage{showkeys}

%%%%%%%%%%%%%%%

\usepackage{enumitem}
\makeatletter
\def\namedlabel#1#2{\begingroup
    #2%
    \def\@currentlabel{#2}%
    \phantomsection\label{#1}\endgroup
}
\makeatother

%%%%%%%%%%%%%%%

\usepackage[pdftex,bookmarks,colorlinks,breaklinks]{hyperref}  % PDF hyperlinks, with coloured links
\definecolor{dullmagenta}{rgb}{0.4,0,0.4}   % #660066
\definecolor{darkblue}{rgb}{0,0,0.4}
\definecolor{darkgreen}{rgb}{0,0.4,0}
\hypersetup{linkcolor=darkblue,citecolor=blue,filecolor=dullmagenta,urlcolor=darkblue} % coloured links
%\hypersetup{linkcolor=black,citecolor=black,filecolor=black,urlcolor=black} % black links

%%% COSAS DE HYPERREF...

%Geo
%\geometry{a4paper} % or letter or a5paper or ... etc

%%%%%%%%%%%%%%%%%%%%%%%%%%%%%%%%%%%%%%%%%%%
%COMANDOS MIOS

\newcommand{\essinf}{\operatorname{ess \, inf}}

\newcommand{\supp}{ \operatorname{supp}}

\newcommand{\car}{\operatorname{Car}}

%%% pruebas %%%

\newcommand{\overlineguay}[2]{\overline{#1\phantom{\setbox10=\hbox{\tiny$#2$} \box10}}\ensuremath{\setbox0=\hbox{$\overline{\mathbb{#1}}$}
\raise\ht0\vbox{ \setbox11=\hbox{\tiny$#2$} \moveleft\the\wd11\hbox{\raise-1.25ex\hbox{\tiny$#2$}}}}}

%\newcommand{\todo}[1]{{\color{red}(#!)}}

%%%

%%%%%%%%%%% Media de una funcion %%%%%%%%%%%%

\def\XXint#1#2#3{{\setbox0=\hbox{$#1{#2#3}{\int}$}
     \vcenter{\hbox{$#2#3$}}\kern-.5\wd0}}

%%%%%%%%%%%%%%%%%%%%%%%%%%%%%%%%%%%%%%%%%%%%

\newtheorem{theorem}{Theorem}[section]
\newtheorem*{theorem*}{Theorem}
\newtheorem{lemma}[theorem]{Lemma}
\newtheorem*{lemma*}{Lemma}
\newtheorem{proposition}[theorem]{Proposition}

\theoremstyle{definition}
\newtheorem{definition}[theorem]{Definition}

\theoremstyle{remark}
\newtheorem{remark}[theorem]{Remark}
\newtheorem{question*}[theorem]{Question}

\numberwithin{equation}{section}
%%%%%%%%%%%%%%%%%%%%%%%%%%%%%%%%%%%%%%%%%%%%%%%%%%

\theoremstyle{theorem}
\newtheorem{ltheorem}{Theorem}

\newtheorem{ltheoremprime}{Theorem}

\newtheorem{lcorollaryprime}[ltheoremprime]{Corollary}

%%%%%%%%%%%%%%%%%%%%%%%%%%%%%%%%%%%%%%%%%%%%%%%%%%

\title{A pointwise estimate for positive dyadic shifts and some applications}

\author{Jos\'e M. Conde-Alonso}
\address{Instituto de Ciencias Matem\'aticas, CSIC-UAM-UC3M-UCM, C/ Nicol\'as Cabrera 13-15. 28049, Madrid. Spain}
\email{jose.conde@icmat.es}
\thanks{J.M. Conde-Alonso was partially supported by the ERC StG-256997-CZOSQP, the Spanish grant MTM2010-16518 and by ICMAT Severo Ochoa Grant SEV-2011-0087 (Spain).}

\author{Guillermo Rey}
\address{Department of Mathematics, Michigan State University, East Lansing MI 48824-1027}
\email{reyguill@math.msu.edu}

\makeindex

\begin{document}

\maketitle

\begin{abstract}
    We prove a pointwise estimate for positive dyadic shifts of complexity $m$ which is linear in the complexity. This can be used to give a pointwise estimate for
    Calder\'on-Zygmund operators and to answer a question posed by A. Lerner. Several applications to weighted estimates for both multilinear Calder\'on-Zygmund operators
    and square functions are discussed.
\end{abstract}

\section{Introduction}\label{Complexity.Introduction}

One particularly useful way to study singular integrals (or maximal operators) is that of decomposing them into sums of simpler dyadic operators. One example of a
recent striking result using this strategy
is the proof of the sharp weighted estimate for the Hilbert transform by S. Petermichl \cite{Petermichl2007}, which was a key step towards 
the full $A_2$ theorem for general Calder\'on-Zygmund operators,
finally proven by T. Hyt\"onen in \cite{Hytonen2012a}. Of course there are many instances of this useful technique, but we will not try to give a thorough historical overview here.

The proof in \cite{Hytonen2012a} was a \textit{tour de force} which was the culmination of many previous partial efforts by others, see \cite{Hytonen2012a} and the references therein. Hyt\"onen did not only prove the $A_2$ theorem, but he
also showed that general Calder\'on-Zygmund operators could be represented as averages of certain simpler ``Haar shifts'' in the spirit of \cite{Petermichl2007}. The sharp weighted bound then followed from the corresponding one for these simpler operators.

Later, A. Lerner gave a simplification of the $A_2$ theorem in \cite{Lerner2013a} which avoided the use of most of the complicated machinery in \cite{Hytonen2012a}; it mainly relied on a general pointwise
estimate for functions in terms of positive dyadic operators which had already been proven in \cite{Lerner2010}. The weighted result for the positive dyadic shifts that this contribution reduced the problem to had been already shown before in \cite{Lacey2009}, see also
\cite{Cruz-Uribe2010} and \cite{Cruz-Uribe2012}. More precisely, the proof of Lerner (essentially) gave the following pointwise estimate for general Calder\'on-Zygmund operators $T$: for every dyadic cube $Q$
\begin{equation} \label{Complexity.PointwiseHPL}
    |Tf(x)| \lesssim \sum_{m=0}^\infty 2^{-\delta m} \mathcal{A}^m_{\mathcal{S}}|f|(x) \quad \text{for a.e. } x\in Q,
\end{equation}
where $\delta > 0$ depends on the operator $T$, $\mathcal{S}$ are collections of dyadic cubes (belonging to same dyadic grid for each fixed $\mathcal{S}$) which depend on $f$, $T$ and $m$, 
and $\mathcal{A}^m_{\mathcal{S}}$ are positive dyadic operators defined by
\begin{equation*}
    \mathcal{A}_{S}^m f(x) = \sum_{Q \in \mathcal{S}} \langle f \rangle_{Q^{(m)}} \mathbbm{1}_Q(x),
\end{equation*}
where $Q^{(m)}$ is the $m$-th dyadic parent of $Q$. The collections $\mathcal{S}$ in \eqref{Complexity.PointwiseHPL} are \textit{sparse} in the following sense: given $0 < \eta< 1$, we say that a collection of cubes $\mathcal{S}$ belonging to the same daydic grid is $\eta$-sparse if for all cubes $Q \in \mathcal{S}$
    there exists measurable subsets $E(Q) \subset Q$ with $|E(Q)| \geq \eta|Q|$ and $E(Q) \cap E(Q') = \emptyset$ unless $Q = Q'$. A collection is called simply sparse if it is
    $\frac{1}{2}$-sparse.

From this pointwise estimate Lerner continues the proof by showing that bounding the operator norm of each $\mathcal{A}^m_\mathcal{S}$ can be reduced to just estimating the operator norm of $\mathcal{A}^0_{S'}$ in the same space
for all possible sparse collections $\mathcal{S}'$. More precisely, he shows that
\begin{equation} \label{Complexity.LinearInM}
    \|\mathcal{A}^m_{\mathcal{S}}f\|_X \lesssim (m+1) \sup_{\mathscr{D}, \mathcal{S'}} \|\mathcal{A}^0_{\mathcal{S}'}f\|_X,
\end{equation}
where the supremum is taken over all dyadic grids $\mathscr{D}$ and all sparse collection $\mathcal{S}' \subset \mathscr{D}$, and where $X$ is any Banach function space (see Chapter 1 of \cite{Bennett1988}).

It is at this point where the duality of $X$ is needed in the argument; the operators $\mathcal{A}^m_\mathcal{S}$ do not lend themselves to Lerner's pointwise formula, while their adjoints do. Consequently, the
question of what to do when no duality is present was left open. Our main result answers this question by proving a stronger (though localized) statement: the operators $\mathcal{A}^m_\mathcal{S}$
are actually pointwise bounded by positive dyadic $0$-shifts:
\begin{ltheorem} \label{Complexity.TheoremA}
    Let $P$ be a dyadic cube and $\mathcal{S}$ a sparse collection of dyadic cubes $Q$  such that $Q^{(m)} \subseteq P$,
    then for all nonnegative integrable functions $f$ on $P$ there exists
    another sparse collection $\mathcal{S}'$ of dyadic subcubes of $P$ such that
    \begin{equation} \label{Complexity.TheoremA.eq}
        \mathcal{A}^m_{\mathcal{S}} f(x) \lesssim (m+1) \mathcal{A}^0_{\mathcal{S'}}f(x) \quad \forall x \in P
    \end{equation}
\end{ltheorem}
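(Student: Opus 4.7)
The plan is to build the sparse family $\mathcal{S}'$ via a Calder\'on--Zygmund principal-cubes argument applied to $f$, and to verify the pointwise bound through a counting argument that exploits both the sparseness of $\mathcal{S}$ and the constraint that each averaging cube $Q^{(m)}$ sits exactly $m$ generations above its $\mathcal{S}$-cube.

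As a first step I regroup the sum defining $\mathcal{A}^m_\mathcal{S} f$ according to the $(m)$-parent. Since cubes of $\mathcal{S}$ sharing a common $(m)$-parent $R$ are dyadic subcubes of $R$ with equal sidelength $2^{-m}\ell(R)$, they are pairwise disjoint, so that
\[
\mathcal{A}^m_\mathcal{S} f(x) = \sum_{R \in \tilde{\mathcal{S}}} \langle f\rangle_R\, \mathbbm{1}_{E_R}(x), \qquad E_R := \bigsqcup_{Q \in \mathcal{S},\ Q^{(m)} = R} Q,
\]
where $\tilde{\mathcal{S}} := \{Q^{(m)} : Q \in \mathcal{S}\}$. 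I then build the principal cubes $\mathcal{F}$ for $f$ in $P$ via the standard stopping time: $P \in \mathcal{F}$, and the $\mathcal{F}$-children of $F$ are the maximal dyadic $F' \subsetneq F$ with $\langle f\rangle_{F'} > 2\langle f\rangle_F$. This family is $\tfrac12$-sparse, and if $F(R)$ denotes the smallest member of $\mathcal{F}$ containing $R$, then $\langle f\rangle_R \leq 2\langle f\rangle_{F(R)}$. Hence the estimate reduces to controlling the counting function $\varphi_F(x) := \#\{R \in \tilde{\mathcal{S}} : F(R)=F,\ x \in E_R\}$, through
\[
\mathcal{A}^m_\mathcal{S} f(x) \leq 2\sum_{F \in \mathcal{F}} \langle f\rangle_F\, \varphi_F(x).
\]

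The core of the proof is then to bound $\varphi_F$ by $(m+1)$, at the cost of enlarging $\mathcal{F}$ by a few carefully chosen cubes from $\tilde{\mathcal{S}}$ to form the final $\mathcal{S}'$. Each $R$ contributing to $\varphi_F(x)$ corresponds bijectively to a $Q \in \mathcal{S}$ containing $x$ with $Q^{(m)} = R$; the constraint $F(R) = F$ confines $\ell(Q)$ to a dyadic window between two consecutive principal cubes. The factor $(m+1)$ is tailored to absorb the chain of $\mathcal{S}$-cubes through $x$ whose $(m)$-parents lie in this window: one partitions such chains into $m+1$ strata (by generation modulo $m+1$, or by the relative position of $Q$ versus $Q^{(m)}$) and dominates each stratum by one layer of an augmented sparse family. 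Verifying that the augmented $\mathcal{S}'$ (principal cubes plus selected auxiliary cubes from $\tilde{\mathcal{S}}$) is $\eta$-sparse for some absolute $\eta > 0$, and possibly splitting it into a bounded number of $\tfrac12$-sparse subfamilies absorbed into $\lesssim$, completes the proof.

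The main obstacle I anticipate is Step~3: securing the sharp linear dependence on $m$ in the counting bound. Naively, $\varphi_F(x)$ is unbounded, since the dyadic tree between two consecutive principal cubes may contain arbitrarily many generations. A natural remedy is induction on $m$: the base case $m=0$ is immediate, and for the inductive step one rewrites $\langle f\rangle_{Q^{(m)}}$ in terms of $\langle f\rangle_{Q^{(m-1)}}$, reduces to an $(m-1)$-shift over a suitably thinned family, and gains a single additive unit in the constant while preserving sparseness. Alternatively one directly exhibits an explicit decomposition $\tilde{\mathcal{S}} = \tilde{\mathcal{S}}_0 \sqcup \cdots \sqcup \tilde{\mathcal{S}}_m$ where the $(m)$-shift structure of $\mathcal{S}$ guarantees that each $\tilde{\mathcal{S}}_j$ can be promoted to a $\tfrac12$-sparse family after intersection with the regions carved out by $\mathcal{F}$.
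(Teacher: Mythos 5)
Your reduction in the first two steps is fine (regrouping by the $m$-th parent and introducing principal cubes for $f$), but the proof has a genuine gap exactly at the step you flag as the ``core'': the claim that, after adding ``a few carefully chosen'' auxiliary cubes, the counting function satisfies $\varphi_F(x)\lesssim m+1$. This claim is essentially equivalent to the theorem itself, and neither of your proposed mechanisms delivers it. Sparseness of $\mathcal{S}$ gives no pointwise control on how many cubes of $\mathcal{S}$ (or of $\tilde{\mathcal{S}}$) contain a given point between two consecutive principal cubes: a long nested chain is sparse, and for $f\equiv\mathbbm{1}_P$ the family $\mathcal{F}$ is trivial while $\mathcal{A}^m_{\mathcal{S}}f(x)$ at the common point of the chain equals the chain length. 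So the dominating family $\mathcal{S}'$ cannot consist of principal cubes of $f$ plus a bounded correction; it must itself contain many nested cubes through such points, i.e.\ it has to be manufactured from the joint structure of $f$ \emph{and} $\mathcal{S}$. Your two fixes do not close this: the induction on $m$ via $\langle f\rangle_{Q^{(m)}}\le 2^d\langle f\rangle_{Q^{(m-1)}}$ loses a multiplicative $2^d$ per step (hence $2^{dm}$, not an additive unit), and the stratification by generation mod $m+1$ still leaves, inside each stratum, arbitrarily long chains of cubes through a point with comparable averages, for which ``dominate each stratum by one layer of an augmented sparse family'' is not an argument.

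For comparison, the paper also slices into residue classes of generations mod $m$ (this is indeed where the factor $m$ comes from, so your instinct about the strata is right), but the hard content is an $m$-independent sparse domination of each slice $\mathcal{A}^{m;0}$. That is proved not by counting against stopping cubes of $f$, but by a constructive selection algorithm with a ``memory'' variable $\Delta_Q$: a cube is selected (given weight $\beta_Q\neq 0$) only when the mass accumulated since the last selection threatens to exceed a fixed multiple of the current product of averages, and the sparseness of the selected family is verified through the endpoint weak-type bound $L^1\times\cdots\times L^1\to L^{1/k,\infty}$ for the shift itself, with constant independent of $m$ (Theorem W.1 of the appendix), applied together with the multilinear maximal function on each selected cube. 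Your sketch contains no substitute for either the selection rule or this weak-type input, and without them the passage from ``the count can be unbounded'' to ``$\lesssim(m+1)$ times a sparse $0$-shift'' is unsupported.
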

In fact, what we will do is to prove Theorem \ref{Complexity.TheoremA} in a slightly more general setting: first, the statement is proven for a certain natural multilinear generalization of the operators $\mathcal{A}^m_\mathcal{S}$. Also, the sparse
collection $\mathcal{S}$ is replaced by a more general Carleson sequence (see the next section for details).

The novelty in our approach is two-fold: we directly attack the pointwise estimate for the operators $\mathcal{A}^m$, instead of bounding their norm in various spaces.
Also, in proving the pointwise bound we develop an algorithm that constructively selects those cubes which will form the family $\mathcal{S}'$. This algorithm has
some ``memory'' and each iteration takes into account the previous steps, a feature which is crucial in our method to ensure that $\mathcal{S}$ is sparse.

As a corollary of Theorem \ref{Complexity.TheoremA}, we find an analogue of \eqref{Complexity.PointwiseHPL} for Calder\'on-Zygmund operators with more general moduli of continuity (see the next section
for the precise definition). In particular, we obtain the following
pointwise estimate for Calder\'on-Zygmund operators:
\begin{lcorollaryprime} \label{Complexity.TheoremA.1}
    If $P$ is a dyadic cube,
    $f$ is an integrable function supported on $P$ and $T$ is a Calder\'on-Zygmund
    operator with modulus of continuity $\omega$, then
    \begin{equation}
        |Tf(x)| \lesssim \sum_{m=0}^\infty \omega(2^{-m}) (m+1) \mathcal{A}^0_{\mathcal{S}_m} |f|(x) \quad \text{for a.e. }x\in P,
    \end{equation}
    where $\mathcal{S}_m$ are sparse collections belonging to at most $3^d$ different dyadic grids.

    Moreover, if we know that $\omega$ satisfies the logarithmic Dini condition:
    \begin{equation}
        \label{eq:Complexity.logDini}
        \int_0^1 \omega(t) \Bigl( 1 + \log\Bigl(  \frac{1}{t} \Bigr)  \Bigr) \frac{dt}{t} < \infty,
    \end{equation}
    then we can find sparse collections $\{\mathcal{S}_1', \dots, \mathcal{S}_{3^d}'\}$, belonging to possibly different dyadic grids, such that
    \begin{equation}
        |Tf(x)| \lesssim \sum_{i=1}^{3^d} \mathcal{A}^0_{\mathcal{S}_i'} |f|(x) \quad \text{for a.e. } x\in P.
    \end{equation}
\end{lcorollaryprime}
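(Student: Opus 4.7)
The plan is to first upgrade the Lerner-type pointwise bound \eqref{Complexity.PointwiseHPL} from the power modulus $t^\delta$ to an arbitrary modulus of continuity $\omega$, and then apply Theorem \ref{Complexity.TheoremA} term-by-term to collapse every $\mathcal{A}^m$ into an $\mathcal{A}^0$. The logarithmic Dini condition in the second assertion guarantees the resulting series is absolutely summable, and a Carleson packing argument bundles it into at most $3^d$ complexity-$0$ sparse operators.

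The first step is to establish
\begin{equation*}
    |Tf(x)| \lesssim \sum_{m=0}^{\infty} \omega(2^{-m})\, \mathcal{A}^m_{\mathcal{S}_m}|f|(x) \quad \text{for a.e. } x \in P,
\end{equation*}
with each $\mathcal{S}_m$ sparse inside one of at most $3^d$ dyadic grids. I would follow the argument in \cite{Lerner2013a}: Lerner's median oscillation formula reduces control of $|Tf|$ to control of the local oscillations $\omega_\lambda(Tf;Q)$ on a sparse family of cubes. Splitting $f = f\mathbbm{1}_{Q^{(1)}} + \sum_m f\mathbbm{1}_{Q^{(m+1)}\setminus Q^{(m)}}$ and using the weak $L^{1,\infty}$ bound for the first piece together with the $\omega$-smoothness of the kernel on each annulus produces, for each annular piece, the gain $\omega(2^{-m})\langle |f|\rangle_{Q^{(m+1)}}$. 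The three-lattice lemma converts the not-necessarily-dyadic cubes produced by Lerner's formula into sparse families in at most $3^d$ shifted dyadic grids.

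With this estimate in hand, Theorem \ref{Complexity.TheoremA} applied inside each grid yields sparse $\mathcal{S}_m'$ (in the same grid as $\mathcal{S}_m$) such that $\mathcal{A}^m_{\mathcal{S}_m}|f|(x) \lesssim (m+1) \mathcal{A}^0_{\mathcal{S}_m'}|f|(x)$, proving the first part. For the second part, note that a routine change of variables $t = 2^{-m}$ shows \eqref{eq:Complexity.logDini} is equivalent to the finiteness of $M := \sum_{m\geq 0} \omega(2^{-m})(m+1)$. For each of the $3^d$ grids $\mathscr{D}_i$, collect all indices with $\mathcal{S}_m' \subset \mathscr{D}_i$ and set
\begin{equation*}
    \alpha_Q := \sum_{m\,:\,\mathcal{S}_m' \subset \mathscr{D}_i,\; Q \in \mathcal{S}_m'} \omega(2^{-m})(m+1).
\end{equation*}
The sparsity of each individual $\mathcal{S}_m'$ gives $\sum_{Q \subseteq R} \alpha_Q |Q| \lesssim M |R|$, so $(\alpha_Q)$ is a Carleson sequence. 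Invoking the Carleson-sequence version of Theorem \ref{Complexity.TheoremA} promised immediately after its statement (with $m=0$) then dominates $\sum_Q \alpha_Q \langle |f|\rangle_Q \mathbbm{1}_Q$ by a single $\mathcal{A}^0_{\mathcal{S}_i'}|f|$ with $\mathcal{S}_i' \subset \mathscr{D}_i$ sparse, assembling the desired $3^d$-term bound.

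The technical heart of the argument is the first step: checking that Lerner's oscillation bound runs through cleanly with $\omega$ only assumed to be a modulus of continuity (no polynomial decay), so that the annular decomposition really does sum to a series of the form $\sum_m \omega(2^{-m})\mathcal{A}^m$ rather than something worse. Once this is established, Steps 2 and 3 are essentially bookkeeping, and the only nontrivial input is Theorem \ref{Complexity.TheoremA} together with its Carleson-sequence enhancement.
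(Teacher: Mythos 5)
Your proposal is correct and follows essentially the same route as the paper: a Lerner-type local oscillation estimate with general modulus $\omega$, the one-third (three-lattice) trick to place the enlarged cubes into at most $3^d$ dyadic grids, Theorem \ref{Complexity.TheoremA} to reduce each complexity-$m$ shift to a complexity-$0$ one, and, under \eqref{eq:Complexity.logDini}, a summed Carleson sequence per grid fed back into the ($m=0$, Carleson-sequence) form of Theorem \ref{Complexity.TheoremA}. The only difference is that you propose re-deriving the $\omega$-version of the oscillation formula following \cite{Lerner2013a}, whereas the paper simply cites it from \cite{Damian2012}, \cite{Hytonen2013} and \cite{Hytonen2012c}.
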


The factor $m$ in \eqref{Complexity.LinearInM} precluded a naive adaptation of the proof in \cite{Lerner2013} to an $A_2$ theorem with the usual Dini condition:
\begin{equation}
    \label{Complexity.Dini}
    \int_0^1 \omega(t) \frac{dt}{t} < \infty.
\end{equation}

since the sum
\begin{equation} \label{Complexity.LogDini}
    \sum_{m=0}^\infty \omega(2^{-m})(m+1) \simeq \int_0^1 \omega(t) \Bigl(1+\log\frac{1}{t} \Bigr) \frac{dt}{t}
\end{equation}
could diverge for some moduli $\omega$ satisfying \eqref{Complexity.Dini}. Moreover, it was shown in \cite{Hytonen2012c} that the weak-type $(1,1)$ norm of
the adjoints of the operators $\mathcal{A}^m_\mathcal{S}$ was at least linear in $m$, even in the unweighted case, so using duality prevented an extension of this type. However, although our argument does not quite give an $A_2$ theorem for Calder\'on-Zygmund operators satisfying the Dini condition (we still need \eqref{Complexity.LogDini} to be finite),
our proof avoids the use of duality and the study of the adjoint operators $(\mathcal{A}_{\mathcal{S}}^m)^*$. It thus removes at least one of the obstructions to possible proofs
of the $A_2$ theorem with the Dini condition which follow this strategy. Hence, removing the linear factor of $m$ in Theorem \ref{Complexity.TheoremA} remains as an interesting open problem.

Apart from being interesting in its own right, a bound for Calder\'on-Zygmund operators by these sums 
of positive $0$-shifts in cases where there is no duality has interesting applications, some of which we describe later. First, let us state a second corollary to Theorem \ref{Complexity.TheoremA.1}:

\begin{lcorollaryprime} \label{Complexity.TheoremA.2}
    Let $\| \cdot \|_X$ be a function quasi-norm (see section \ref{Complexity.Pointwise}) and $T$ a Calder\'on-Zygmund operator satisfying the logarithmic Dini condition, then
    \begin{equation}
        \|Tf\|_X \lesssim \sup_{\mathscr{D}, \mathcal{S}} \|\mathcal{A}^0_\mathcal{S}|f|\|_X,
    \end{equation}
    where the supremum is taken over all dyadic grids $\mathscr{D}$ and all sparse collections $\mathcal{S} \subset \mathscr{D}$.
\end{lcorollaryprime}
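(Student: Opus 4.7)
The plan is to reduce Corollary \ref{Complexity.TheoremA.2} directly to the pointwise bound supplied by the second half of Corollary \ref{Complexity.TheoremA.1}; essentially all the analytic work is already done there, and what remains is a localization/exhaustion and a quasi-triangle inequality.

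First I would treat the case of $f$ supported on a single dyadic cube $P$. For such $f$, the logarithmic Dini hypothesis allows one to apply Corollary \ref{Complexity.TheoremA.1} and obtain sparse collections $\mathcal{S}'_1,\dots,\mathcal{S}'_{3^d}$, drawn from at most $3^d$ dyadic grids, with
\[
    |Tf(x)| \lesssim \sum_{i=1}^{3^d} \mathcal{A}^0_{\mathcal{S}'_i}|f|(x) \qquad \text{for a.e. } x\in P.
\]
Applying the quasi-norm $\|\cdot\|_X$ to the truncation $\mathbbm{1}_P Tf$ and using the quasi-triangle inequality iterated on the (fixed, dimension-depending) number $3^d$ of summands, one gets
\[
    \|\mathbbm{1}_P Tf\|_X \;\lesssim\; \sum_{i=1}^{3^d} \bigl\| \mathcal{A}^0_{\mathcal{S}'_i}|f| \bigr\|_X \;\leq\; 3^d \sup_{\mathscr{D},\mathcal{S}} \bigl\|\mathcal{A}^0_{\mathcal{S}}|f|\bigr\|_X,
\]
where the implicit constant swallows the iterated quasi-triangle constant.

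To pass from this localized inequality to the global statement I would exhaust $\mathbb{R}^d$ by a nested sequence of dyadic cubes $P_N \uparrow \mathbb{R}^d$ and set $f_N = f\,\mathbbm{1}_{P_N}$. Each $f_N$ is supported on the dyadic cube $P_N$, so the previous step gives
\[
    \|\mathbbm{1}_{P_N} T f_N\|_X \;\lesssim\; \sup_{\mathscr{D},\mathcal{S}} \bigl\|\mathcal{A}^0_{\mathcal{S}}|f_N|\bigr\|_X \;\leq\; \sup_{\mathscr{D},\mathcal{S}} \bigl\|\mathcal{A}^0_{\mathcal{S}}|f|\bigr\|_X
\]
uniformly in $N$, the last inequality coming from $|f_N|\leq |f|$ and pointwise monotonicity of $\mathcal{A}^0_{\mathcal{S}}$ together with monotonicity of the quasi-norm. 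Since $T f_N\to T f$ pointwise a.e.\ (standard for compactly supported truncations followed by a density argument) and $\mathbbm{1}_{P_N}\to 1$, the Fatou property built into the definition of a function quasi-norm in Section \ref{Complexity.Pointwise} yields $\|Tf\|_X \leq \liminf_N \|\mathbbm{1}_{P_N} Tf_N\|_X$, which closes the argument.

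The main obstacle is not combinatorial but technical: making sure that this limiting procedure is compatible with the quasi-norm $\|\cdot\|_X$, i.e.\ that the pointwise (or in measure) convergence of the truncations $T f_N$ to $T f$ can be upgraded to a lower-semicontinuity statement for $\|\cdot\|_X$. If the definition of \emph{function quasi-norm} adopted in Section \ref{Complexity.Pointwise} encodes a Fatou property (as in the classical Bennett--Sharpley setting), the passage is immediate; if not, one argues instead on a dense subclass of sufficiently regular $f$ and concludes by density. Beyond this, the only delicate point is ensuring that the implicit constant in the quasi-triangle inequality does not depend on $f$, which is automatic since we apply it to a fixed number $3^d$ of terms.
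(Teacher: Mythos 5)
Your proposal is correct and follows essentially the same route as the paper: the paper's entire proof consists of applying the quasi-norm to the second assertion of Corollary \ref{Complexity.TheoremA.1} and invoking properties \ref{Complexity.QuasiNorm.P1} and \ref{Complexity.QuasiNorm.P3} to absorb the fixed number $3^d$ of sparse operators into the supremum, which is exactly your first step. Your additional exhaustion argument via the Fatou property \ref{Complexity.QuasiNorm.P4} goes beyond the paper (whose estimate remains localized to the cube supporting $f$, consistent with the authors' own remark that their pointwise bound is local) and is a reasonable, if slightly glossed, way to deglobalize, provided one uses cubes such as $[-2^N,2^N)^d$ as top cubes of their own lattices and passes to an a.e.\ convergent subsequence of $Tf_N$.
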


We now describe two immediate applications of our result. First we can continue the program, initiated in \cite{Damian2012} and extended in \cite{Li2014}, which aims to extend the sharp
weighted estimates for Calder\'on-Zygmund operators to their multilinear analogues (as in \cite{Grafakos2002}). In particular we obtain
\begin{ltheorem} \label{Complexity.TheoremB}
    Let $T$ be a multilinear Calder\'on-Zygmund operator (see section \ref{Complexity.Pointwise} for the relevant definitions).
    Suppose $1 < p_1, \dots, p_k < \infty$, $\frac{1}{p} = \frac{1}{p_1} + \dots + \frac{1}{p_k}$ and $\vec{w} \in A_{\vec{P}}$. Then
    \begin{equation}
        \|T\vec{f}\|_{L^p(v_{\vec{w}})} \lesssim [\vec{w}]_{A_{\vec{P}}}^{\max(1, \frac{p_1'}{p}, \dots, \frac{p_k'}{p})}\prod_{i=1}^k \|f_i\|_{L^p(w_i)}.
    \end{equation}
\end{ltheorem}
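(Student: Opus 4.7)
The plan has two steps: (i) reduce Theorem~\ref{Complexity.TheoremB} to a weighted estimate for a single multilinear positive sparse $0$-shift, using the multilinear generalization of Theorem~\ref{Complexity.TheoremA}; (ii) establish the sharp weighted bound for that sparse $0$-shift following the program of \cite{Damian2012, Li2014}.

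For step (i) I would apply the multilinear generalization of Corollary~\ref{Complexity.TheoremA.1} (whose existence is promised just after the statement of Theorem~\ref{Complexity.TheoremA}) to the multilinear Calder\'on-Zygmund operator $T$. The standard smoothness requirements on a multilinear Calder\'on-Zygmund kernel imply the logarithmic Dini condition \eqref{eq:Complexity.logDini}, so I obtain a pointwise domination
\begin{equation*}
    |T\vec{f}(x)| \lesssim \sum_{i=1}^{N(d,k)} \mathcal{A}_{\mathcal{S}_i}^{0}(|f_1|,\dots,|f_k|)(x), \qquad \mathcal{A}_{\mathcal{S}}^{0}(\vec{f})(x) := \sum_{Q \in \mathcal{S}} \Big( \prod_{j=1}^{k} \langle f_j \rangle_Q \Big) \mathbbm{1}_Q(x),
\end{equation*}
with each $\mathcal{S}_i$ a sparse family inside some dyadic grid. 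Theorem~\ref{Complexity.TheoremB} therefore reduces to a uniform bound for one such multilinear sparse operator.

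For step (ii) I fix a sparse family $\mathcal{S}$ and bound $\mathcal{A}_{\mathcal{S}}^{0}$ from $L^{p_1}(w_1)\times\cdots\times L^{p_k}(w_k)$ into $L^p(v_{\vec{w}})$ with the desired constant by dualizing against $h \in L^{p'}(v_{\vec{w}})$ and performing a principal-cube stopping argument adapted to the weight attaining the index $\max(p_1'/p,\dots,p_k'/p)$. The sparseness condition $|E(Q)|\geq\tfrac12|Q|$ lets me replace each cube's measure by an integral of the multilinear maximal operator $\mathcal{M}$; the $A_{\vec{P}}$ hypothesis, combined with the known sharp weighted bound for $\mathcal{M}$, then converts the geometric sum over stopping generations into exactly the claimed power of $[\vec{w}]_{A_{\vec{P}}}$.

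The main obstacle is the bookkeeping in (ii): the exponent $\max(1, p_1'/p, \dots, p_k'/p)$ must emerge from a careful control of the stopping generations for a single distinguished weight rather than accumulating contributions from all $k$ weights simultaneously. This delicate balancing is precisely what was carried out in \cite{Damian2012, Li2014} for the complexity-$0$ model operator and so can be imported once the pointwise reduction of step (i) is in hand. The role of Theorem~\ref{Complexity.TheoremA} here is essential: in the multilinear setting the adjoint of a sparse operator is not itself of sparse form, so the duality-based reduction from \cite{Lerner2013a} is unavailable.
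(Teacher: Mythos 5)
Your proposal follows essentially the same route as the paper: Theorem \ref{Complexity.TheoremB} is obtained by combining the multilinear sparse pointwise domination coming from Theorem \ref{Complexity.TheoremA} (packaged as Corollary \ref{Complexity.TheoremA.2}, applied with the function quasi-norm $\|\cdot\|_{L^p(v_{\vec w})}$) with the sharp weighted bound for the multilinear sparse $0$-shift, which the paper simply cites from \cite{Li2014} as Theorem \ref{Complexity.LiMoenSun.Result} rather than reproving. One caveat: your sketch of step (ii) by dualizing against $h\in L^{p'}(v_{\vec w})$ is not meaningful in the range $p<1$, which is exactly the new range this theorem covers, but since you ultimately import the sparse estimate from \cite{Damian2012,Li2014} (valid without the restriction $p\geq 1$), this does not affect the correctness of the argument.
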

The same theorem was proven in \cite{Li2014} but with the additional hypothesis that $p$ had to be at least $1$. The proof of this Theorem is an application of the result in
\cite{Li2014} which proved the same result (without the condtion $p \geq 1$) but for a multilinear analogue of the operators $\mathcal{A}^m_{\mathcal{S}}$, together with 
Theorem \ref{Complexity.TheoremA}. In fact, we will need a multilinear version of Theorem \ref{Complexity.TheoremA} which we state and prove in the next
section.

The second application is a sharp aperture weighted estimate for square functions which extends a result in \cite{Lerner2014}. In particular (see section \ref{Complexity.Applications}
for the relevant definitions), we have:
\begin{ltheorem} \label{Complexity.TheoremC}
    Let $\alpha > 0$, then the square function $S_{\alpha,\psi}$ for the cone in $\mathbbm{R}^{d+1}_+$  of apperture $\alpha$ and the standard kernel $\psi$ satisfies
    \begin{equation*}
        \|S_{\alpha,\psi}f\|_{L^{p,\infty}(\mathbbm{R}^d,w)} \lesssim \alpha^d [w]_{A_p}^{1/p} \|f\|_{L^p(\mathbbm{R}^d,w)} \quad \text{for }1 < p < 2
    \end{equation*}
    and
    \begin{equation} \label{Complexity.TheoremC.EndpointWT}
        \|S_{\alpha,\psi}f\|_{L^{2,\infty}(\mathbbm{R}^d,w)} \lesssim \alpha^d [w]_{A_2}^{1/2} (1+\log[w]_{A_2}) \|f\|_{L^2(\mathbbm{R}^d,w)}.
    \end{equation}
\end{ltheorem}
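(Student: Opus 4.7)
The strategy is to realize the square function as the norm of a Hilbert-space-valued Calderón-Zygmund operator, invoke a vector-valued analogue of Corollary \ref{Complexity.TheoremA.1} to dominate it pointwise by sparse $0$-shifts with a constant of the correct order in $\alpha$, and then appeal to known weighted weak-type bounds for such sparse operators.

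Concretely, I would set $H_\alpha = L^2(\Gamma_\alpha(0), dy\,dt/t^{d+1})$ where $\Gamma_\alpha(0) = \{(y,t)\in\mathbbm{R}^{d+1}_+ : |y|<\alpha t\}$, and define $T_\alpha f(x)(y,t) = (\psi_t * f)(x+y)$, so that $\|T_\alpha f(x)\|_{H_\alpha} = S_{\alpha,\psi}f(x)$. A direct computation with the kernel $K(x,z)(y,t) = \psi_t(x+y-z)$, using the standard decay/smoothness of $\psi$ together with the constraint $|y|<\alpha t$, yields the size bound $\|K(x,z)\|_{H_\alpha} \lesssim \alpha^d/|x-z|^d$ and a matching smoothness estimate $\|K(x,z) - K(x',z)\|_{H_\alpha} \lesssim \alpha^d\,\omega(|x-x'|/|x-z|)/|x-z|^d$ whenever $2|x-x'| < |x-z|$, with $\omega$ inherited from the smoothness of $\psi$ and therefore satisfying the logarithmic Dini condition \eqref{eq:Complexity.logDini}.

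Next, I would observe that the proofs of Theorem \ref{Complexity.TheoremA} and Corollary \ref{Complexity.TheoremA.1} go through verbatim in the Hilbert-space-valued setting: the key tools (local mean oscillation, Calderón-Zygmund decompositions, the constructive selection of the family $\mathcal{S}'$) are insensitive to replacing $|\cdot|$ by $\|\cdot\|_{H_\alpha}$ in the output. This gives the pointwise estimate
\begin{equation*}
    S_{\alpha,\psi}f(x) = \|T_\alpha f(x)\|_{H_\alpha} \lesssim \alpha^d \sum_{i=1}^{3^d} \mathcal{A}^0_{\mathcal{S}_i}|f|(x),
\end{equation*}
where the sparse families $\mathcal{S}_i$ belong to (at most) $3^d$ dyadic grids and the implicit constant depends only on $\psi$ and $d$.

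Finally, I would invoke the known weak-type weighted bounds for the sparse operators $\mathcal{A}^0_\mathcal{S}$: namely $\|\mathcal{A}^0_\mathcal{S}g\|_{L^{p,\infty}(w)} \lesssim [w]_{A_p}^{1/p}\|g\|_{L^p(w)}$ for $1<p<2$, and the sharp endpoint bound at $p=2$ with the extra factor $(1+\log[w]_{A_2})$ (these appear in the same circle of ideas as \cite{Cruz-Uribe2010}, \cite{Cruz-Uribe2012}, \cite{Lerner2014}). Combining these with the pointwise estimate above and summing over the $3^d$ grids yields the two inequalities of Theorem \ref{Complexity.TheoremC} with the advertised $\alpha^d$ dependence. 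The main obstacle I expect is bookkeeping: first, verifying the vector-valued extension of Theorem \ref{Complexity.TheoremA} with constants independent of the choice of target Hilbert space; and second, tracking the $\alpha$-dependence through the kernel estimates so that exactly one factor of $\alpha^d$ survives, rather than $\alpha^{d/2}$ or $\alpha^{2d}$, which requires careful use of the constraint $|y|<\alpha t$ when integrating the kernel over the cone.
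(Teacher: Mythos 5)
There is a genuine gap, and it lies in your final step. Your route dominates $S_{\alpha,\psi}f$ pointwise by the \emph{linear} sparse operators $\alpha^d\sum_i\mathcal{A}^0_{\mathcal{S}_i}|f|$, which destroys the square-function ($\ell^2$) structure, and the weighted weak-type bounds you then invoke for $\mathcal{A}^0_{\mathcal{S}}$ are not true for linear sparse operators: the exponents $[w]_{A_p}^{1/p}$ for $1<p<2$ and $[w]_{A_2}^{1/2}(1+\log[w]_{A_2})$ at $p=2$ are the Lacey--Scurry bounds \eqref{eq:LaceyScurry}, which concern the sparse \emph{square} function $\mathcal{A}^0_{\mathcal{S}}(f,f)^{1/2}=\bigl(\sum_{Q\in\mathcal{S}}\langle|f|\rangle_Q^2\mathbbm{1}_Q\bigr)^{1/2}$, not $\mathcal{A}^0_{\mathcal{S}}|f|=\sum_{Q\in\mathcal{S}}\langle|f|\rangle_Q\mathbbm{1}_Q$. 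For the linear operator the weak-type norm grows essentially like $[w]_{A_p}$: in $d=1$ take the sparse family $\mathcal{S}=\{[0,2^{-k})\}_{k\geq 0}$, $f=\mathbbm{1}_{[0,1)}$ and $w(x)=|x|^{-(1-\epsilon)}$, so that $[w]_{A_p}\sim\epsilon^{-1}$; then $\mathcal{A}^0_{\mathcal{S}}f(x)\sim\log(1/x)$ on $(0,1)$, and choosing $\lambda\sim\epsilon^{-1}$ gives $\lambda\,w(\{\mathcal{A}^0_{\mathcal{S}}f>\lambda\})^{1/p}\gtrsim\epsilon^{-1}\epsilon^{-1/p}$ while $\|f\|_{L^p(w)}\sim\epsilon^{-1/p}$, i.e. $\|\mathcal{A}^0_{\mathcal{S}}\|_{L^p(w)\to L^{p,\infty}(w)}\gtrsim[w]_{A_p}\gg[w]_{A_p}^{1/p}$. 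So the problem is not bookkeeping: the dominating object in your scheme is genuinely too large, and no sharpening of the last step can recover \eqref{Complexity.TheoremC.EndpointWT}.

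The fix, and what the paper actually does, is to keep the quadratic structure throughout: one starts from the estimate of \cite{Lerner2014} for the \emph{square} of the regularized square function, $|(\widetilde{S}_{\alpha,\psi}f(x))^2-m_{Q_0}((\widetilde{S}_{\alpha,\psi}f)^2)|\lesssim\alpha^{2d}\sum_{m\geq 0}2^{-\delta m}\sum_{Q\in\mathcal{S}}\langle|f|\rangle_{2^mQ}^2\mathbbm{1}_Q(x)$ (note the single factor $\alpha^{2d}$ is already built in there), then applies Theorem \ref{Complexity.TheoremA} in its bilinear form with $f_1=f_2=f$ to replace each complexity-$m$ piece by a complexity-$0$ bilinear sparse operator at the cost of $(m+1)$, which is summable against $2^{-\delta m}$; taking square roots one is left with $\alpha^d\sup_{\mathscr{D},\mathcal{S}}\|\mathcal{A}^0_{\mathcal{S}}(f,f)^{1/2}\|_{L^{p,\infty}(w)}$, to which \eqref{eq:LaceyScurry} from \cite{Lacey2012} applies and yields exactly the stated exponents. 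If you want to keep your vector-valued Calder\'on--Zygmund realization, you would have to produce a sparse domination of $\|T_\alpha f(x)\|_{H_\alpha}^2$ by $\alpha^{2d}\sum_Q\langle|f|\rangle_Q^2\mathbbm{1}_Q$ (equivalently, a bilinear sparse bound), not of $\|T_\alpha f(x)\|_{H_\alpha}$ by a linear sparse operator.
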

An analogous result was shown in \cite{Lerner2014} for $2 < p< 3$:
\begin{equation*}
    \|S_{\alpha,\psi}f\|_{L^{p,\infty}(\mathbbm{R}^d,w)} \lesssim \alpha^d [w]_{A_p}^{1/2} (1+\log[w]_{A_2}) \|f\|_{L^p(\mathbbm{R}^d,w)}.
\end{equation*}
The proof relies on the use of Lerner's pointwise formula and previous results by Lacey and Scurry \cite{Lacey2012}. However, in \cite{Lerner2014} the requirement of $p>2$ was necessary for the same
reason the proof in the multilinear weighted estimates required $p \geq 1$ (a certain space had no satisfactory duality properties). Theorem \ref{Complexity.TheoremA} can be used
in almost the same way as with the weighted multilinear estimates to prove Theorem \ref{Complexity.TheoremC}. Indeed, the proofs in \cite{Lacey2012} and \cite{Lerner2014} reduce the problem
to estimating certain discrete positive operators which can be seen to be particular instances of the positive multilinear $m$-shifts used in the proof of Theorem \ref{Complexity.TheoremB}.

As was noted in \cite{Lacey2012}, estimate \eqref{Complexity.TheoremC.EndpointWT} can be seen as an analogue of the result in \cite{Lerner2009} stablishing the endpoint
weighted weak-type estimate for Calder\'on-Zygmund operators (see also \cite{NRVV-WeightedHaarShifts} for a similar estimate from below and more information on the sharpness of this estimate, known as the
weak $A_1$ conjecture). Indeed, the proof in \cite{Lacey2012} can be adapted to the operators $\mathcal{A}^m_{\mathcal{S}}$ and combined with our pointwise bound to give a simpler
proof of the results in \cite{Lacey2012} which in particular avoids the use of extrapolation, as well as other techniques. In this direction, 
it seems reasonable that Lacey and Scurry's proof in \cite{Lacey2012} could be adapted to the multilinear setting, however we will not pursue this problem here.

 Finally, as a third application of our results, it is possible to to give a more direct proof of the result in \cite{Hytonen2013} for the $q$-variation of Calder\'on-Zygmund operators satisfying the logarithmic Dini condition by using the pointwise estimate analogous to \eqref{Complexity.PointwiseHPL} in \cite{Hytonen2013} and then applying Theorem \ref{Complexity.TheoremA}. We will, however, not pursue this argumentation either.

Shortly before uploading this preprint, Andrei Lerner kindly communicated to the authors that he, jointly with Fedor Nazarov, had independently proven 
a theorem very similar to Corollary \ref{Complexity.TheoremA.1} \cite{LernerNazarov}. Though the hypothesis are the same, their result differs from the one in this note in that we give a localized pointwise estimate while 
their pointwise estimate is valid for all of $\mathbbm{R}^d$. However, our result seems to be as powerful in the applications. 
\section*{Acknowledgements}

The authors wish to thank Javier Parcet, Ignacio Uriarte-Tuero and Alexander Volberg for insightful discussions, and Andrei Lerner and Fedor Nazarov for sharing with us the details of
their construction.

\section{Pointwise domination} \label{Complexity.Pointwise}

The goal of this section is the proof of Theorem A and its consequences as stated in the introduction.
We will prove the result in the level of generality of multilinear operators. Given a cube $P_0$ on $\mathbb{R}^d$, we will denote by
$\mathscr{D}(P_0)$ the dyadic lattice obtained by successive dyadic subdivisions of $P_0$.
By a dyadic grid we will denote any dyadic lattice composed of cubes with sides parallel to the axis.
A $k$-linear positive dyadic shift of complexity $m$ is an operator of the form
\[
		\mathcal{A}_{P_0, \alpha}^m \vec{f}(x) = \mathcal{A}_{P_0, \alpha}^m (f_1,f_2, \cdots, f_k)(x) := \sum_{Q \in \mathscr{D}(P_0)} \alpha_Q \Bigl( \prod_{i=1}^k \langle f_i \rangle_{Q^{(m)}} \Bigr) \mathbbm{1}_Q(x).
	\]	
	
	As a first step towards the proof of Theorem A, it is convenient to separate the scales of (or \textit{slice}) $\mathcal{A}^m_{\alpha,P_0}$ as follows:
\begin{align*}
	\mathcal{A}^m_{\alpha,P_0}\vec{f}(x) &= 
		\sum_{n=0}^{m-1} \sum_{j=1}^\infty \sum_{Q \in \mathscr{D}_{jm+n}(P_0)} \alpha_Q \Bigl( \prod_{i=1}^k \langle f_i \rangle_{Q^{(m)}} \Bigr) \mathbbm{1}_Q(x) \\
	&=: \sum_{n=0}^{m-1} \mathcal{A}_{P_0,\alpha}^{m,n} \vec{f}(x).
\end{align*}

Note that $\mathscr{D}_k(P_0)$ denotes the $k$-th generation of the lattice $\mathscr{D}(P_0)$. Now we rewrite $\mathcal{A}_{P_0,\alpha}^{m;n}$ 
as a sum of disjointly supported operators of the form $\mathcal{A}_{P,\alpha}^{m;0}$. Indeed,
\begin{align*}
	\mathcal{A}_{P_0,\alpha}^{m;n} \vec{f}(x) & = \sum_{j=1}^\infty \sum_{Q \in \mathscr{D}_{jm+n}(P_0)} \alpha_Q \Bigl( \prod_{i=1}^k \langle f_i \rangle_{Q^{(m)}} \Bigr) \mathbbm{1}_Q(x) \\ & = 
		\sum_{P \in \mathscr{D}_n(P_0)} \sum_{j=1}^\infty \sum_{Q \in \mathscr{D}_{jm}(P)} \alpha_Q \Bigl( \prod_{i=1}^k \langle f_i \rangle_{Q^{(m)}} \Bigr) \mathbbm{1}_Q(x) \\
	&= \sum_{P \in \mathscr{D}_n(P_0)} \mathcal{A}_{P,\alpha}^{m;0} \vec{f}(x),
\end{align*}
	
	which leads to the expression
	\begin{align*}
		\mathcal{A}^m_{\alpha,P_0}\vec{f}(x) &= \sum_{n=0}^{m-1} \sum_{P \in \mathscr{D}_n(P_0)} \mathcal{A}^{m;0}_{P,\alpha} \vec{f}(x).
	\end{align*}

        We say that a sequence $\{\alpha_Q\}_{Q \in \mathscr{D}(P_0)}$ is Carleson if its Carleson constant $\|\alpha\|_{\text{Car}(P_0)} < \infty$, where
        \begin{equation*}
           \|\alpha\|_{\text{Car}(P_0)} = \sup_{P \in \mathscr{D}(P_0)} \frac{1}{|P|} \sum_{Q \in \mathscr{D}(P)} \alpha_Q|Q|.
        \end{equation*}

	The following intermediate step is the key to our approach:
\begin{proposition} \label{Complexity:Pointwise.MainTheorem}
	Let $m\geq 1$ and $\alpha$ be a Carleson sequence. For integrable functions $f_1, \dots, f_k \geq 0$ on $P_0$ 
	there exists a sparse collection $\mathcal{S}$ of cubes in $\mathscr{D}(P_0)$ such that
	\[
		\mathcal{A}^{m;0}_{P_0,\alpha} \vec{f}(x) \leq C_1\|\alpha\|_{\car(P_0)} \sum_{Q \in \mathcal{S}} \Bigl( \prod_{i=1}^k\langle f_i \rangle_Q \Bigr) \mathbbm{1}_Q(x),
	\]
	where $C_1$ only depends on $k$ and $d$, and in particular is independent of $m$.
\end{proposition}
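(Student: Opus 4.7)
The plan is to reduce matters to a principal-cubes construction on an $m$-thinned sub-lattice of $\mathscr{D}(P_0)$, controlled by two simultaneous stopping conditions: one on the geometric growth of $\prod_i\langle f_i\rangle$, and a second, less standard one on the cumulative Carleson mass of $\alpha$ along chains. Setting
\[
\widetilde{\mathscr{D}}(P_0):=\bigcup_{j\geq 0}\mathscr{D}_{jm}(P_0),
\]
we obtain a dyadic lattice whose parent operation is $\widetilde{Q}:=Q^{(m)}$ (and in which every cube has $2^{md}$ children). In this notation,
\[
\mathcal{A}^{m;0}_{P_0,\alpha}\vec{f}(x) \;=\; \sum_{\substack{Q\in\widetilde{\mathscr{D}}(P_0),\ Q\neq P_0 \\ Q\ni x}} \alpha_Q \prod_{i=1}^{k}\langle f_i\rangle_{\widetilde{Q}},
\]
and the Carleson constant of $\alpha$ restricted to $\widetilde{\mathscr{D}}(P_0)$ is bounded by $\|\alpha\|_{\car(P_0)}$, independently of $m$.

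I would then build recursively a principal family $\mathcal{F}\subset\widetilde{\mathscr{D}}(P_0)$ by putting $P_0\in\mathcal{F}$ and, for each $L\in\mathcal{F}$, declaring its $\mathcal{F}$-children to be the maximal $L'\in\widetilde{\mathscr{D}}(P_0)$ with $L'\subsetneq L$ for which
\[
\mathrm{(a)}\quad\prod_{i=1}^{k}\langle f_i\rangle_{L'} > A\prod_{i=1}^{k}\langle f_i\rangle_{L},\qquad\text{or}\qquad\mathrm{(b)}\quad \sum_{\substack{Q\in\widetilde{\mathscr{D}}(P_0)\\ L\supsetneq Q\supseteq L'}}\alpha_Q > B\,\|\alpha\|_{\car(P_0)}
\]
for constants $A=A(k)$, $B=B(k)$ to be chosen. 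Setting $E(L):=L\setminus\bigcup\{\mathcal{F}\text{-children of }L\}$, sparsity would be checked by splitting the children into the two types: type-(a) children are contained in $\bigcup_i\{M^d_L f_i>A^{1/k}\langle f_i\rangle_L\}$, with $M^d_L$ the dyadic maximal function on $\widetilde{\mathscr{D}}(L)$, and so contribute total measure at most $kA^{-1/k}|L|$ by the weak-type $(1,1)$ bound; while a standard swap-of-sums Carleson argument gives, for the type-(b) children, $\sum|L'|^{(b)}\leq B^{-1}|L|$. Choosing $A=(8k)^{k}$ and $B=8$ yields $|E(L)|\geq|L|/2$.

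The pointwise bound then follows from these stopping properties. Fix $x\in P_0$ and let $P_0=L_0\supsetneq L_1\supsetneq\cdots$ be the chain of $\mathcal{F}$-cubes through $x$; split the operator sum according to $\pi_\mathcal{F}(Q)=L_n$. For $Q\subsetneq L_n$ with $\pi_\mathcal{F}(Q)=L_n$, the fact that $\widetilde{Q}$ was not selected in the construction means (a) fails for it, so $\prod_i\langle f_i\rangle_{\widetilde{Q}}\leq A\prod_i\langle f_i\rangle_{L_n}$; and taking the limit along the chain through $x$ down to $L_{n+1}$, the failure of (b) yields $\sum\alpha_Q\leq B\|\alpha\|_{\car(P_0)}$ over exactly these $Q$. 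The boundary terms $Q=L_n$ with $n\geq 1$ contribute at most $\alpha_{L_n}\prod_i\langle f_i\rangle_{\widetilde{L_n}}\leq A\|\alpha\|_{\car(P_0)}\prod_i\langle f_i\rangle_{L_{n-1}}$, using $\alpha_Q\leq\|\alpha\|_{\car(P_0)}$ together with $\widetilde{L_n}\subseteq L_{n-1}$ failing (a). Summing in $n$ yields
\[
\mathcal{A}^{m;0}_{P_0,\alpha}\vec{f}(x) \leq A(B+1)\,\|\alpha\|_{\car(P_0)}\sum_{L\in\mathcal{F},\,L\ni x}\prod_{i=1}^{k}\langle f_i\rangle_L,
\]
and we take $\mathcal{S}=\mathcal{F}$.

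The main conceptual obstacle is that a single stopping condition on $\prod_i\langle f_i\rangle$ (condition (a) alone) would leave an uncontrolled factor $\sum_{Q\ni x}\alpha_Q$ in front of each $\prod_i\langle f_i\rangle_{L_n}$, because the Carleson hypothesis on $\alpha$ gives only integrated and not pointwise control. The second condition (b) is designed to convert that integrated control into pointwise control at the cost of only a $B^{-1}|L|$ measure loss, and to do so with constants independent of $m$—which is precisely what is needed to avoid the linear-in-$m$ factor appearing in \eqref{Complexity.LinearInM}.
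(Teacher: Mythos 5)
Your proposal is correct, but it proves the proposition by a genuinely different route than the paper. You pass to the $m$-thinned lattice $\bigcup_{j\ge 0}\mathscr{D}_{jm}(P_0)$, view $\mathcal{A}^{m;0}_{P_0,\alpha}$ as a complexity-zero shift there, and run a principal-cubes stopping time with two simultaneous conditions: geometric growth of $\prod_i\langle f_i\rangle$ and packing of $\alpha$ along chains. Sparsity then needs only the weak $(1,1)$ bound for the localized dyadic maximal function (for the type-(a) children) and a swap-of-sums Carleson packing estimate (for type-(b)), while the pointwise bound follows from the failure of (a) at each $\widetilde{Q}$ and of (b) at the cube just above the next stopping cube; your constant $A(B+1)$ is indeed independent of $m$ because everything lives on the thinned lattice. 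The paper instead constructs $\mathcal{S}$ by an inductive algorithm with a ``memory'' variable $\Delta_Q$ and selector $\beta_Q$ (Lemma \ref{lemmaunodos} gives the pointwise bound by induction on scales), and its sparsity proof (Lemma \ref{lemmaunotres}) hinges on the multilinear endpoint weak-type bound for $\mathcal{A}^{m;0}_{P,\alpha}$ itself together with the multilinear maximal operator, which is exactly what the appendix (Theorem \ref{Complexity.LebesgueWeakType.Theorem}, via an $L^2$ bound and a Calder\'on--Zygmund decomposition) is there to supply. What your argument buys is self-containedness and simplicity: it bypasses the appendix entirely, uses only the dyadic maximal function and the definition of the Carleson norm, and yields a dimension-free constant of order $(8k)^k$; it is close in spirit to the Lerner--Nazarov construction mentioned at the end of the introduction. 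What the paper's algorithmic method buys is the explicit constructive selection with memory that the authors highlight as their main novelty, at the price of invoking the weak-type theorem for the shifts. Two small points you should tidy up: the phrase ``taking the limit along the chain down to $L_{n+1}$'' should be replaced by the observation that condition (b) fails at $\widetilde{L_{n+1}}$ (or, when the chain of stopping cubes terminates, at arbitrarily deep cubes of the chain through $x$, followed by monotone convergence); and you should record the trivial cases $\widetilde{Q}=L_n$ and $\prod_i\langle f_i\rangle_{L_n}=0$, both of which are harmless since $f_i\ge 0$.
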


To prove Proposition \ref{Complexity:Pointwise.MainTheorem} we will proceed in three steps:
we will first construct the collection $\mathcal{S}$, then show that we have the required pointwise bound,
and finally that $\mathcal{S}$ is sparse. By homogeneity, we will assume that $\|\alpha\|_{\car(P_0)} = 1$.
Also, we will assume that the sequence $\alpha$ is finite, but our constants will be independent of the number of elements in the sequence.

Let $\Delta_{P_0} = 0$ and, for each $Q \in \mathscr{D}_{mj}(P_0)$ with $j \geq 0$, define the sequence $\{\gamma_Q\}_Q$ by
\[
	\gamma_Q = \max_{R \in \mathscr{D}_m(Q)} \alpha_R.
\]

For each $Q \in \mathscr{D}_{mj}(P_0)$ with $j \geq 0$, we will inductively define the quantities $\Delta_Q$ and $\beta_Q$ as follows:
\[
	\beta_Q =
		\begin{cases}
			0 &\text{if } \Delta_Q - \Bigl( \prod_{i=1}^k \langle f_i \rangle_Q \Bigr) \gamma_Q \geq 0 \\
			2^{2(k+1)}C_W &\text{otherwise},
		\end{cases}
\]
where $C_W$ is the boundedness constant of the unweighted endpoint weak-tpe of the operators $\mathcal{A}^m$ proved in Theorem \ref{Complexity.LebesgueWeakType.Theorem} in the appendix. Also, for every $R \in \mathscr{D}_m(Q)$ we define
\[
	\Delta_R = \Delta_Q + (\beta_Q - \alpha_R)\Bigl( \prod_{i=1}^k \langle f_i \rangle_Q \Bigr).
\]
Note that the definition only applies to cubes in $\mathscr{D}_{mj}(P_0)$ for some $j$. For all other cubes in $\mathscr{D}_{P_0}$, we set $\beta_Q= \Delta_Q = 0$. The collection $\mathcal{S}$ consists of those cubes $Q \in \mathscr{D}(P_0)$ for which $\beta_Q \neq 0$. Note that, since $2^{2(k+1)}C_1 > 1 = \|\alpha\|_{\car(P_0)} \geq \alpha_R$ for all $R$ and by the definition of $\gamma_Q$, we must have $\Delta_Q \geq 0$ for all $Q$. This can be easily seen by induction. 

\begin{remark}
Observe that the quantity $\Delta_Q$ is what ultimately allows us to construct the correct dominating operator.
It can be seen as a kind of ``memory'' of the algorithm, which allows the construction of the sequence $\{\beta_Q\}_Q$ to be more judicious.
\end{remark}

\begin{lemma}
\label{lemmaunodos}
	We have the pointwise bound
	\begin{equation} \label{Complexity:Pointwise.Lemma.eq}
		\mathcal{A}^{m;0}_{P_0,\alpha} \vec{f}(x) \leq \sum_{Q \in \mathscr{D}(P_0)} \beta_Q\Bigl( \prod_{i=1}^k \langle f_i \rangle_Q \Bigr) \mathbbm{1}_Q(x).
	\end{equation}
\end{lemma}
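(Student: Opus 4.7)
The plan is to prove \eqref{Complexity:Pointwise.Lemma.eq} pointwise along a single chain of dyadic ancestors, exploiting the fact that the recursion defining the $\Delta_Q$'s telescopes to exactly the difference between the right- and left-hand sides of the inequality. Fix $x \in P_0$ and let $\{Q_j\}_{j \geq 0}$ be the unique chain with $Q_j \in \mathscr{D}_{jm}(P_0)$, $x \in Q_j$; in particular $Q_0 = P_0$ and $Q_{j-1} = Q_j^{(m)}$. Evaluated at $x$, the left-hand side of \eqref{Complexity:Pointwise.Lemma.eq} becomes $\sum_{j\geq 1} \alpha_{Q_j} \prod_i \langle f_i\rangle_{Q_{j-1}}$, while the right-hand side collapses to $\sum_{j\geq 0} \beta_{Q_j} \prod_i \langle f_i\rangle_{Q_j}$, since $\beta_Q = 0$ off $\bigcup_{j} \mathscr{D}_{jm}(P_0)$.

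Along this chain, the defining identity reads $\Delta_{Q_{j+1}} - \Delta_{Q_j} = (\beta_{Q_j} - \alpha_{Q_{j+1}})\prod_i \langle f_i\rangle_{Q_j}$. Telescoping from $j=0$ to $j=J-1$, and using $\Delta_{Q_0} = \Delta_{P_0} = 0$, yields
\begin{equation*}
\Delta_{Q_J} \;=\; \sum_{j=0}^{J-1} \beta_{Q_j} \prod_i \langle f_i\rangle_{Q_j} \;-\; \sum_{j=1}^{J} \alpha_{Q_j} \prod_i \langle f_i\rangle_{Q_{j-1}}.
\end{equation*}
Consequently, once we know $\Delta_{Q_J} \geq 0$, the required inequality follows by letting $J \to \infty$.

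Thus the only real content is the claim $\Delta_Q \geq 0$ for every $Q \in \bigcup_j \mathscr{D}_{jm}(P_0)$, which I prove by induction on the generation. The base case $\Delta_{P_0}=0$ is by definition. For the step, fix $Q$ with $\Delta_Q \geq 0$ and $R \in \mathscr{D}_m(Q)$. If $\beta_Q \neq 0$, then $\beta_Q = 2^{2(k+1)}C_W \geq 1 \geq \alpha_R$, where the last inequality uses the normalization $\|\alpha\|_{\car(P_0)} = 1$ (applied with $P=R$ in the Carleson supremum forces $\alpha_R \leq 1$); hence $\Delta_R = \Delta_Q + (\beta_Q - \alpha_R)\prod_i \langle f_i\rangle_Q \geq \Delta_Q \geq 0$. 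If instead $\beta_Q = 0$, then the defining dichotomy gives $\Delta_Q \geq \gamma_Q \prod_i \langle f_i\rangle_Q$, and since $\gamma_Q = \max_{R' \in \mathscr{D}_m(Q)} \alpha_{R'} \geq \alpha_R$ we conclude $\Delta_R = \Delta_Q - \alpha_R \prod_i \langle f_i\rangle_Q \geq 0$.

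In short, no serious obstacle should arise: the recursion has been engineered so that the two terms in the telescoped formula for $\Delta_{Q_J}$ match exactly the two sides of \eqref{Complexity:Pointwise.Lemma.eq}, and $\beta_Q$ is switched on precisely when needed to preserve the non-negativity of $\Delta_Q$. The substantive content of Proposition \ref{Complexity:Pointwise.MainTheorem} lies in the sparsity statement (and the choice of the constant $2^{2(k+1)}C_W$, which will be used there through the weak-type bound of Theorem \ref{Complexity.LebesgueWeakType.Theorem}); the pointwise step treated here is essentially bookkeeping.
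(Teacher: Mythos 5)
Your proof is correct. It runs on the same engine as the paper's argument --- the recursion $\Delta_R=\Delta_Q+(\beta_Q-\alpha_R)\prod_i\langle f_i\rangle_Q$ together with the nonnegativity of $\Delta$ --- but it is organized differently: you fix $x$, restrict both sides to the chain $Q_j\in\mathscr{D}_{jm}(P_0)$ containing $x$, and telescope the recursion into the exact identity $\Delta_{Q_J}=\sum_{j=0}^{J-1}\beta_{Q_j}\prod_i\langle f_i\rangle_{Q_j}-\sum_{j=1}^{J}\alpha_{Q_j}\prod_i\langle f_i\rangle_{Q_{j-1}}$, so that $\Delta_{Q_J}\ge 0$ gives the bound for the partial sums and the monotone limit $J\to\infty$ finishes. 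The paper instead proves, by induction from the finest scale upward, the localized inequality $\mathcal{A}^{m;0}_{P,\alpha}\vec f(x)\le \Delta_P+\sum_{Q\in\mathscr{D}(P)}\beta_Q\bigl(\prod_i\langle f_i\rangle_Q\bigr)\mathbbm{1}_Q(x)$ for every $P\in\mathscr{D}_{jm}(P_0)$; its induction hypothesis plays exactly the role of your telescoped tail, but anchoring the induction requires the standing reduction that the sequence $\alpha$ is finite (so the operator vanishes below some generation), whereas your chainwise limit argument needs no such reduction and produces an identity rather than an inequality. You also spell out the induction proving $\Delta_Q\ge 0$ (splitting on whether $\beta_Q=0$, and using $\alpha_R\le\|\alpha\|_{\car(P_0)}=1\le\gamma_Q$ or $\le\beta_Q$ accordingly), which the paper only asserts with a one-line justification before the lemma; your verification matches that sketch. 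In short: same key idea, a mildly different and slightly leaner bookkeeping scheme, with the small bonus of dispensing with the finiteness assumption at this step.
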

\begin{proof}
	We will prove by induction the following claim: if $P \in \mathscr{D}_{jm}(P_0)$ for some $j \geq 0$, then
	\begin{equation} \label{Complexity:Pointwise.Lemma.eq.induction}
		\mathcal{A}^{m;0}_{P,\alpha} \vec{f}(x) \leq \Delta_P + \sum_{Q \in \mathscr{D}(P)} \beta_Q \Bigl( \prod_{i=1}^k \langle f_i \rangle_Q \Bigr) \mathbbm{1}_Q(x).
	\end{equation}
	Note that, when $P = P_0$, this is exactly \eqref{Complexity:Pointwise.Lemma.eq}. Since $\alpha$ is finite,
        there is a smallest $j_0 \in \mathbb{N}$ such that $\alpha_Q = 0$ for all cubes 
        $Q \in \mathscr{D}_{\geq j_0m}(P_0)$\footnote{
        We use $\mathscr{D}_{\geq k}(P)$ to denote those cubes $Q$ in $\mathscr{D}(P)$ of generation at least $k$, so $|Q| \leq 2^{-dk}|P|$.}. Let $Q$ be any cube in
	$\mathscr{D}_{j_0m}(P_0)$, we obviously have
	\[
		\mathcal{A}^{m;0}_{Q,\alpha} \vec{f} \equiv 0 \quad \text{in } Q.
	\]
	Since $\Delta_Q \geq 0$, the claim \eqref{Complexity:Pointwise.Lemma.eq.induction} is trivial for $P \in \mathscr{D}_{j_0m}(P_0)$. Now, assume by induction that we have proved \eqref{Complexity:Pointwise.Lemma.eq.induction} for all cubes $P \in \mathscr{D}_{jm}(P_0)$ with $1\leq j_1 \leq j$
	and let $P$ be any cube in $\mathscr{D}_{(j_1-1)m}(P_0)$. By definition,
	\[
		\mathcal{A}^{m;0}_{P,\alpha} \vec{f}(x) = \sum_{Q \in \mathscr{D}_m(P)} \Bigl( \alpha_Q \Bigl( \prod_{i=1}^k \langle f_i \rangle_P \Bigr) \mathbbm{1}_Q(x)
			+ \mathcal{A}_{Q,\alpha}^{m;0}\vec{f}(x) \Bigr).
	\]
	Let $x \in Q \in \mathscr{D}_m(P)$, then by the induction hypothesis and the definition of $\Delta_Q$:
	\begin{align*}
		\mathcal{A}^{m;0}_{P, \alpha}\vec{f}(x) &\leq \alpha_Q \Bigl( \prod_{i=1}^k \langle f_i \rangle_P \Bigr) + \Delta_Q + 
			\sum_{R \in \mathscr{D}(Q)} \beta_R \Bigl( \prod_{i=1}^k \langle f_i \rangle_R \Bigr) \mathbbm{1}_R(x) \\
		&= \alpha_Q \Bigl( \prod_{i=1}^k \langle f_i \rangle_P \Bigr) + \Delta_P + 
			(\beta_P - \alpha_Q)\Bigl( \prod_{i=1}^k \langle f_i \rangle_P \Bigr) + 
				\sum_{R \in \mathscr{D}(Q)} \beta_R \Bigl( \prod_{i=1}^k \langle f_i \rangle_R \Bigr)\mathbbm{1}_R(x) \\
		&= \Delta_P + \beta_P \Bigl( \prod_{i=1}^k \langle f_i \rangle_P \Bigr) + \sum_{R \in \mathscr{D}(Q)} \beta_R \Bigl( \prod_{i=1}^k \langle f_i \rangle_R \Bigr) \mathbbm{1}_R(x) \\
		&= \Delta_P + \sum_{R \in \mathscr{D}(P)} \beta_R \Bigl( \prod_{i=1}^k \langle f_i \rangle_R \Bigr) \mathbbm{1}_R(x),
	\end{align*}
	which is what we wanted to show.
\end{proof}

\begin{lemma}
\label{lemmaunotres}
	The collection $\mathcal{S}$ is sparse.
\end{lemma}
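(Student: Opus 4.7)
\textbf{Proof plan for Lemma~\ref{lemmaunotres}.}
The standard strategy is to set, for each $Q\in\mathcal{S}$,
$$E(Q) \;=\; Q \setminus \bigsqcup_{Q' \in \mathrm{ch}_\mathcal{S}(Q)} Q',$$
where $\mathrm{ch}_\mathcal{S}(Q)$ denotes the collection of maximal strict $\mathcal{S}$-descendants of $Q$. These sets are pairwise disjoint by construction, so $\tfrac12$-sparseness reduces to proving $\sum_{Q' \in \mathrm{ch}_\mathcal{S}(Q)}|Q'| \leq \tfrac12|Q|$ for every $Q\in\mathcal{S}$.

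The first step is to unwind the ``memory'' encoded in the $\Delta$'s. Fix $Q'\in\mathrm{ch}_\mathcal{S}(Q)$ and write the ancestral chain $Q = P_j\supsetneq P_{j-1}\supsetneq\dots\supsetneq P_0 = Q'$ with $P_{i-1}\in\mathscr{D}_m(P_i)$. By maximality of $Q'$ the intermediate cubes satisfy $\beta_{P_i}=0$ for $1\leq i\leq j-1$, so iterating the recursion for $\Delta$ along the chain gives
$$\Delta_{Q'} \;=\; \Delta_Q \;+\; \beta_Q\prod_{l=1}^{k}\langle f_l\rangle_Q \;-\; \sum_{i=1}^{j}\alpha_{P_{i-1}}\prod_{l=1}^{k}\langle f_l\rangle_{P_i}.$$
For any $x\in Q'$ the rightmost sum is exactly the contribution to $\mathcal{A}^{m;0}_{Q,\alpha}\vec{f}(x)$ coming from the chain cubes $P_0,\dots,P_{j-1}$, hence is bounded above by $\mathcal{A}^{m;0}_{Q,\alpha}\vec{f}(x)$. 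Combining with $\Delta_Q\geq 0$, $\Delta_{Q'}<\gamma_{Q'}\prod_l\langle f_l\rangle_{Q'}$ (because $Q'\in\mathcal{S}$), and the bound $\gamma_{Q'}\leq \|\alpha\|_{\car(P_0)}=1$ (since each $\alpha_R$ is dominated by the Carleson constant), we obtain the pointwise lower bound
$$\mathcal{A}^{m;0}_{Q,\alpha}\vec{f}(x) \;+\; \prod_{l=1}^{k}\langle f_l\rangle_{Q'} \;>\; \beta_Q\prod_{l=1}^{k}\langle f_l\rangle_Q \qquad (x\in Q').$$

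The second step is a dichotomy on $\mathrm{ch}_\mathcal{S}(Q)$ according to the size of $\prod_l\langle f_l\rangle_{Q'}$ relative to $\prod_l\langle f_l\rangle_Q$. If $\prod_l\langle f_l\rangle_{Q'}\leq\tfrac{\beta_Q}{2}\prod_l\langle f_l\rangle_Q$, the previous display forces $\mathcal{A}^{m;0}_{Q,\alpha}\vec{f}(x)>\tfrac{\beta_Q}{2}\prod_l\langle f_l\rangle_Q$ on all of $Q'$, and the union of such $Q'$ sits inside a level set whose measure is controlled by the unweighted weak-type bound for $\mathcal{A}^m$ of Theorem~\ref{Complexity.LebesgueWeakType.Theorem} (constant $C_W$). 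Otherwise $\prod_l\langle f_l\rangle_{Q'} > \tfrac{\beta_Q}{2}\prod_l\langle f_l\rangle_Q$, and $Q'\subseteq\{M_{\mathscr{D}(Q)}\vec{f}>\tfrac{\beta_Q}{2}\prod_l\langle f_l\rangle_Q\}$ for $M_{\mathscr{D}(Q)}$ the dyadic multilinear maximal operator; the standard weak $L^{1/k}$ bound (obtained by applying the weak $(1,1)$ bound coordinatewise) controls that level set. The value $\beta_Q = 2^{2(k+1)}C_W$ is calibrated precisely so that each contribution is at most $\tfrac14|Q|$, yielding the desired $\tfrac12|Q|$.

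The main obstacle is establishing the memory identity for $\Delta_{Q'}$ and recognizing that the partial sum it produces is pointwise dominated by $\mathcal{A}^{m;0}_{Q,\alpha}\vec{f}$. This is the bridge between the algorithmic bookkeeping variable $\Delta$ and the operator itself; the fact that $\mathcal{A}^{m;0}$ only sees cubes at multiples of $m$ below $Q$ is what guarantees that the intermediate scales (where $\beta$ vanishes) contribute nothing to the identity. Once this step is in place, the case analysis and the two weak-type estimates are routine.
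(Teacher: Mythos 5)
Your argument is correct and follows essentially the same route as the paper: unwind the $\Delta$-recursion along the chain from $Q$ to each maximal $\mathcal{S}$-child (using that intermediate $\beta$'s vanish by maximality), identify the resulting chain sum as a nonnegative piece of $\mathcal{A}^{m;0}_{Q,\alpha}\vec f$ on $Q'$, invoke the stopping inequality $\Delta_{Q'}<\gamma_{Q'}\prod_l\langle f_l\rangle_{Q'}$, and finish with the weak $L^1\times\cdots\times L^1\to L^{1/k,\infty}$ bounds for the shift (Theorem~\ref{Complexity.LebesgueWeakType.Theorem}) and the dyadic multilinear maximal operator. The only (cosmetic) difference is that you split into a dichotomy and use a union bound over two level sets, whereas the paper absorbs the terms $\gamma_{R}\prod_l\langle f_l\rangle_R$ into the auxiliary operator $\mathcal{G}_P$ and applies a single weak-type estimate to $\mathcal{G}_P+\mathcal{A}^{m;0}_{P,\alpha}$.
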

\begin{proof}
	Let $P \in \mathcal{S}$, we have to show that the set
	\[
		F := \bigcup_{Q \subsetneq P, Q \in \mathcal{S}} Q
	\]
	satisfies $|F| \leq \frac{1}{2}|P|$. To this end, let $\mathcal{R}$ be the collection of maximal (strict) subcubes of $P$ which are in $\mathcal{S}$, Note that for all $R \in \mathcal{R}$ we have 
	$R \in \mathscr{D}_{N_Rm}(P)$ for some $N_R\geq1$. We thus have
	\[
		F = \bigsqcup_{R \in \mathcal{R}} R.
	\]
	
	By maximality, for all $R \in \mathcal{R}$ and dyadic cubes $Q$ with $R \subsetneq Q \subsetneq P$ we have $\beta_Q = 0$. For all $R\in \mathcal{R}$ and $1 \leq j \leq N_R$ we now claim that
	\begin{equation} \label{Complexity.Pointwise.Sparse.Delta.Estimate}
		\Delta_{R^{((N_R-j)m)}} \geq \beta_P\Bigl( \prod_{i=1}^k \langle f_i \rangle_P \Bigr) - \sum_{\nu=1}^j \alpha_{R^{((N_R-\nu)m)}} \Bigl( \prod_{i=1}^k \langle f_i \rangle_{R^{((N_R-\nu+1)m)}} \Bigr).
	\end{equation}
	Indeed, one can prove this by induction on $j$. If $j=1$ then by definition we have
	\begin{align*}
		\Delta_{R^{((N_R-1)m)}} &= \Delta_P + (\beta_P - \alpha_{R^{((N_R-1)m)}})\Bigl( \prod_{i=1}^k \langle f_i \rangle_P \Bigr) \\
		&\geq \beta_P \Bigl( \prod_{i=1}^k \langle f_i \rangle_P \Bigr) - \alpha_{R^{((N_R-1)m)}}\Bigl( \prod_{i=1}^k \langle f_i \rangle_P \Bigr),
	\end{align*}
	since $\Delta_P \geq 0$.
	
	To prove the induction step, observe that (by the induction hypothesis) for $j>1$
	\begin{align*}
		\Delta_{R^{((N_R-j)m)}} &= \Delta_{R^{((N_R-j+1)m)}} + (\beta_{R^{((N_R-j+1)m)}} - \alpha_{R^{((N_R-j)m)}}) \Bigl( \prod_{i=1}^k \langle f_i \rangle_{R^{((N_R-j+1)m)}} \Bigr) \\
		&= \Delta_{R^{((N_R-j+1)m)}} - \alpha_{R^{((N_R-j)m)}} \Bigl( \prod_{i=1}^k \langle f_i \rangle_{R^{((N_R-j+1)m)}} \Bigr) \\
		&\geq \beta_P\Bigl( \prod_{i=1}^k \langle f_i \rangle_P \Bigr) - \sum_{\nu=1}^{j} \alpha_{R^{((N_R-\nu)m)}} \Bigl( \prod_{i=1}^k \langle f_i \rangle_{R^{((N_R-\nu+1)m)}} \Bigr).
	\end{align*}
	
	From \eqref{Complexity.Pointwise.Sparse.Delta.Estimate} with $j = N_R$, we have (since the terms are nonnegative)
	\[
		\Delta_{R} \geq \beta_{P} \Bigl( \prod_{i=1}^k \langle f_i \rangle_P \Bigr) - \mathcal{A}^{m;0}_{P,\alpha}\vec{f}(x)
	\]
	for all $x \in R$. Since $\beta_{R} \neq 0$, we must have
	\[
		\Bigl( \prod_{i=1}^k \langle f_i \rangle_R \Bigr) \gamma_{R} - \Delta_{R} > 0,
	\]
	i.e.:
	\[
		\Bigl( \prod_{i=1}^k \langle f_i \rangle_R \Bigr) \gamma_{R} + \mathcal{A}^{m;0}_{P,\alpha}\vec{f}(x) > 2^{2(k+1)}C_W \Bigl( \prod_{i=1}^k \langle f_i \rangle_P \Bigr)
	\]
	for all $x \in R$. Let $\mathcal{G}_P\vec{f} = \sum_{R \in \mathcal{R}} \gamma_{R} \Bigl( \prod_{i=1}^k \langle f_i \rangle_R \Bigr) \mathbbm{1}_{R}$, then for all $x \in R$ we have
	\[
		\mathcal{G}_Pf(x) + \mathcal{A}^{m;0}_{P,\alpha}\vec{f}(x) > 2^{2(k+1)}C_W \Bigl( \prod_{i=1}^k \langle f_i \rangle_P \Bigr),
	\]
	hence
	\begin{align*}
		|F| &\leq \left|\left\{x \in P: \, \mathcal{G}_P \vec{f}(x) + \mathcal{A}^{m;0}_{P,\alpha} \vec{f}(x) > 2^{2(k+1)}C_W \Bigl( \prod_{i=1}^k \langle f_i \rangle_P \Bigr) \right\}\right| \\
		&\leq \frac{\|\mathcal{G}_P + \mathcal{A}_{P,\alpha}^{m;0}\|_{L^{1}(P) \times \dots \times L^1(P) \to L^{1/k,\infty}(P)}^{1/k}}{\Bigl(2^{2(k+1)}C_W  \Bigl( \prod_{i=1}^k \langle f_i \rangle_P \Bigr)\Bigr)^{1/k}} \Bigl( \prod_{i=1}^k \|f_i\|_{L^1(P)} \Bigr)^{1/k} \\
		&= \frac{\|\mathcal{G}_P + \mathcal{A}_{P,\alpha}^{m;0}\|_{L^{1}(P) \times \dots \times L^1(P) \to L^{1/k,\infty}(P)}^{1/k}}{(2^{2(k+1)}C_W)^{1/k}} |P|
	\end{align*}
	
	Let us compute the operator norm $\|\mathcal{G}_P\|_{L^1(P) \times \dots \times L^1(P) \to L^{1/k,\infty}(P)}$. Observe that, since $\gamma_Q \leq 1$ for all $Q$,
	the operator $\mathcal{G}$ is pointwise bounded
	by the multi-(sub)linear maximal operator introduced in \cite{Lerner2009a} (we have localized the operator in the obvious way):
	\[
		\mathcal{M}_P\vec{f}(x) = \sup \Bigl\{ \prod_{i=1}^k \frac{1}{|Q|} \int_Q |f_i(y)| \, dy:\, x \in Q \in \mathscr{D}(P) \Bigr\}.
	\]
	This operator is bounded from $L^1(P) \times \dots \times L^1(P) \to L^{1/k,\infty}(P)$, the proof can be found in \cite{Lerner2009a}. Therefore, we have
	\[
		\|\mathcal{M}_P \vec{f}\|_{L^{1/k,\infty}(P)}  \leq \prod_{i=1}^k \|f_i\|_{L^1(P)}.
	\]
	
	On the other hand we have
	\[
		\|\mathcal{A}_{P,\alpha}^{m;0} \vec{f}\|_{L^{1/k,\infty}(P)} \leq C_{W} \prod_{i=1}^k \|f_i\|_{L^1(P)}
	\]
	by Theorem W.1. Combining these estimates we get
	\[
		\|\mathcal{G}_P + \mathcal{A}_{P,\alpha}^{m;0}\|_{L^{1}(P) \times \dots \times L^1(P) \to L^{1/k,\infty}(P)} \leq 2^{k+1}(1+C_W) \leq 2^{k+2}C_W
	\]
	and the result follows.
\end{proof}

From lemmas \ref{lemmaunodos} and \ref{lemmaunotres} Proposition \ref{Complexity:Pointwise.MainTheorem} follows at once.
The proof shows that one can actually take $C_1=2^{2+k(6+d(2k-1))}$. We are now ready to finish the proof of Theorem A, which we state here in full generality:

\begin{theorem}
    Let $\alpha$ be a Carleson sequence and let $P_0$ be a dyadic cube. For every $k$-tuple of nonnegative integrable functions $f_1, \dots, f_k$ on $P$
    there exists a sparse collection $\mathcal{S}$ of cubes in $\mathscr{D}(P)$ such that
    \begin{equation*}
        \mathcal{A}^m_{P,\alpha} \vec{f}(x) \leq C_2 \sum_{Q \in \mathcal{S}} \Bigl( \prod_{i=1}^k \langle f_i \rangle_Q \Bigr) \mathbbm{1}_Q(x).
    \end{equation*}
\end{theorem}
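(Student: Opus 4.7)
The strategy is to reduce the full operator $\mathcal{A}^m_{P_0,\alpha}$ to the complexity-zero case handled by Proposition~\ref{Complexity:Pointwise.MainTheorem} via the slicing identity
\[
    \mathcal{A}^m_{P_0,\alpha}\vec{f}(x) = \sum_{n=0}^{m-1}\sum_{P \in \mathscr{D}_n(P_0)} \mathcal{A}^{m;0}_{P,\alpha}\vec{f}(x)
\]
already established at the beginning of the section, and then to consolidate the resulting sparse families into a single one.

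For each pair $(n,P)$ with $0 \leq n \leq m-1$ and $P \in \mathscr{D}_n(P_0)$, the restriction of $\alpha$ to $\mathscr{D}(P)$ has Carleson constant at most $\|\alpha\|_{\car(P_0)}$, so Proposition~\ref{Complexity:Pointwise.MainTheorem} produces a sparse collection $\mathcal{S}_P \subset \mathscr{D}(P)$ with
\[
    \mathcal{A}^{m;0}_{P,\alpha}\vec{f}(x) \leq C_1\|\alpha\|_{\car(P_0)}\sum_{Q \in \mathcal{S}_P}\Bigl(\prod_{i=1}^k\langle f_i\rangle_Q\Bigr)\mathbbm{1}_Q(x).
\]
For fixed $n$, the cubes in $\mathscr{D}_n(P_0)$ are pairwise disjoint, so the witness sets $\{E(Q)\}$ associated with distinct $P$'s cannot interfere; the disjoint union $\mathcal{S}_n := \bigsqcup_{P \in \mathscr{D}_n(P_0)} \mathcal{S}_P$ is therefore again sparse as a subfamily of $\mathscr{D}(P_0)$. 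Summing over $n$ yields the intermediate bound
\[
    \mathcal{A}^m_{P_0,\alpha}\vec{f}(x) \leq C_1\|\alpha\|_{\car(P_0)}\sum_{n=0}^{m-1}\sum_{Q \in \mathcal{S}_n}\Bigl(\prod_{i=1}^k\langle f_i\rangle_Q\Bigr)\mathbbm{1}_Q(x).
\]

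The main obstacle lies in the final consolidation step: the naive set-theoretic union $\bigcup_{n=0}^{m-1}\mathcal{S}_n$ is at best $\frac{1}{2m}$-sparse, so one cannot simply take it as $\mathcal{S}$. To resolve this, introduce the coefficient sequence $\alpha'_Q := \#\{n : Q \in \mathcal{S}_n\}$, so that the right-hand side above coincides with $C_1\|\alpha\|_{\car(P_0)}$ times the generic positive operator $\sum_Q \alpha'_Q(\prod_i\langle f_i\rangle_Q)\mathbbm{1}_Q$. Each $\mathcal{S}_n$ is $\frac{1}{2}$-sparse and hence Carleson with constant at most $2$, and Carleson constants are additive under pointwise sums of nonnegative sequences; thus $\|\alpha'\|_{\car(P_0)} \leq 2m$. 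The remaining task, dominating a positive multilinear Carleson operator pointwise by a sparse one, can be carried out by the same stopping-time scheme that drives the proof of Proposition~\ref{Complexity:Pointwise.MainTheorem} in its degenerate configuration where the outer averaging cube coincides with the summation cube. This produces a single sparse $\mathcal{S} \subset \mathscr{D}(P_0)$ satisfying
\[
    \sum_Q \alpha'_Q\Bigl(\prod_{i=1}^k\langle f_i\rangle_Q\Bigr)\mathbbm{1}_Q(x) \leq C\,\|\alpha'\|_{\car(P_0)}\sum_{Q \in \mathcal{S}}\Bigl(\prod_{i=1}^k\langle f_i\rangle_Q\Bigr)\mathbbm{1}_Q(x),
\]
and chaining the estimates yields the claim with $C_2 = O(m\,\|\alpha\|_{\car(P_0)})$, in agreement with the $(m+1)$ linear dependence of Theorem~A.
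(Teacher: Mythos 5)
Your proposal is correct and follows essentially the same route as the paper: slice $\mathcal{A}^m_{P_0,\alpha}$ into the operators $\mathcal{A}^{m;0}_{P,\alpha}$, apply Proposition~\ref{Complexity:Pointwise.MainTheorem} on each disjoint $P \in \mathscr{D}_n(P_0)$, and merge the $m$ resulting sparse families into a single Carleson sequence of constant at most $2m$ which is then dominated by one sparse collection. The only cosmetic difference is the last step: where you invoke the stopping-time algorithm directly in the complexity-zero configuration, the paper re-indexes $\mathcal{A}^0_{P_0,\mu}$ as $\mathcal{A}^{1;0}_{P_0,\beta}$ with $\beta_Q=\mu_{Q^{(1)}}$ so as to reapply the proposition verbatim, yielding the same $O\bigl((m+1)\|\alpha\|_{\car(P_0)}\bigr)$ dependence.
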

\begin{proof}
	If $m=0$ we can just apply Proposition \ref{Complexity:Pointwise.MainTheorem} after noting that $\mathcal{A}^0_{P_0,\alpha}$ can be written as $\mathcal{A}^{1;0}_{P_0,\beta}$, where
	\[
		\beta_Q = \alpha_{Q^{(1)}}.
	\]
	One easily sees that $\|\alpha\|_{\car(P_0)} = \|\beta\|_{\car(P_0)}$. Hence, we may assume that $m \geq 1$. Recall the expression 
	\begin{align*}
		\mathcal{A}^m_{P_0,\alpha}\vec{f}(x) &= \sum_{n=0}^{m-1} \sum_{P \in \mathscr{D}_n(P_0)} \mathcal{A}^{m;0}_{P,\alpha} \vec{f}(x).
	\end{align*}
	from the beginning of the section. By Proposition \ref{Complexity:Pointwise.MainTheorem}, for each $0 \leq n \leq m-1$ and each $P \in \mathscr{D}_n(P_0)$ we can find a sparse
	collection of cubes $\mathcal{S}_{P}^n \subset \mathscr{D}(P)$ such that
	\[
		\mathcal{A}^{m;0}_{P,\alpha} \vec{f}(x) \leq C_1 \|\alpha\|_{\car(P_0)} \sum_{Q \in \mathcal{S}_P^n} \Bigl( \prod_{i=1}^k \langle f_i \rangle_Q \Bigr) \mathbbm{1}_Q(x).
	\]
	Observe that the collection $\mathcal{S}^n = \cup_{P \in \mathscr{D}_n(P_0)} \mathcal{S}_P^n$ is also sparse, so
	\begin{equation}\label{Complexity.Pointwise.eq}
		\mathcal{A}^m_{P_0,\alpha}\vec{f}(x) \leq C_1 \|\alpha\|_{\car(P_0)} \sum_{n=0}^{m-1} \sum_{Q \in \mathcal{S}^n} \Bigl( \prod_{i=1}^k \langle f_i \rangle_Q \Bigr) \mathbbm{1}_Q(x).
	\end{equation}
	For $0 \leq n \leq m-1$ define
	\[
		\mu_Q^n =
			\begin{cases}
				1 &\text{if } Q \in \mathcal{S}^n \\
				0 &\text{otherwise.}
			\end{cases}
	\]
	
	Since the collections $\mathcal{S}^n$ are sparse, the sequences $\mu^n$ are Carleson sequences with $\|\mu^n\|_{\car(P_0)} \leq 2$,
	therefore the sequence
	\[
		\mu_Q := \sum_{n=0}^{m-1} \mu_Q^n
	\]
	is also Carleson with $\|\mu\|_{\car(P_0)} \leq 2m$.
	
	With this we can continue the argument using estimate \eqref{Complexity.Pointwise.eq} and the case $m=0$:
	\begin{align*}
		\mathcal{A}^m_{P_0,\alpha}\vec{f}(x) &\leq C_1 \|\alpha\|_{\car(P_0)} \sum_{n=0}^{m-1} \sum_{Q \in \mathcal{S}^n} \Bigl( \prod_{i=1}^k \langle f_i \rangle_Q \Bigr) \mathbbm{1}_Q(x) \\
		&= C_1 \|\alpha\|_{\car(P_0)} \sum_{n=0}^{m-1} \sum_{Q \in \mathscr{D}(P_0)} \mu_Q^n \Bigl( \prod_{i=1}^k \langle f_i \rangle_Q \Bigr) \mathbbm{1}_Q(x) \\
		&= C_1 \|\alpha\|_{\car(P_0)} \sum_{Q \in \mathscr{D}(P_0)} \mu_Q \Bigl( \prod_{i=1}^k \langle f_i \rangle_Q \Bigr) \mathbbm{1}_Q(x) \\
		&= C_1 \|\alpha\|_{\car(P_0)} \mathcal{A}^0_{P_0,\mu} \vec{f}(x) \\
		&\leq C_1 \|\alpha\|_{\car(P_0)} C_1 2m \sum_{Q \in \mathcal{S}} \Bigl( \prod_{i=1}^k \langle f_i \rangle_Q \Bigr) \mathbbm{1}_Q(x),
	\end{align*}
	which yields the result with $C_2 = 2C_1^2$.
\end{proof}

We now detail how to use Theorem \ref{Complexity.TheoremA} to derive the multilinear version of corollaries \ref{Complexity.TheoremA.1} and \ref{Complexity.TheoremA.2}.
For us, a multilinear Calder\'on-Zygmund operator will be an operator $T$ satisfying
	\[
		T(f_1, \dots, f_k) = \int_{\mathbb{R}^{dk}} K(x, y_1, \dots, y_k) f_1(y_1) \cdot \dots \cdot f_k(y_k) dy_1 \, \dots \, dy_k
	\]
	for all $x \notin \cap_{i=1}^k \supp f_i$ for appropriate $f_i$. Also we will require that $T$ extends to a bounded operator from $L^{q_1} \times \dots L^{q_k}$ to $L^q$
	where
	\[
		\frac{1}{q} = \frac{1}{q_1} + \dots + \frac{1}{q_k},
	\]
	and that it satisfies the size estimate
	\[
		|K(y_0, \dots, y_k)| \leq \frac{A}{\Bigl(\sum_{i,j=0}^k |y_i-y_j|\Bigr)^{kd}}.
	\]

        $\omega$ will be the modulus of continuity of the kernel of the operator i.e. a positive nondecreasing continuous and doubling that satisfies
	\[
		|K(y_0, \dots, y_j, \dots, y_k)-K(y_0, \dots, y_j', \dots, y_k)| \leq C\omega\left(\frac{|y_j-y_j'|}{\sum_{i,j=0}^k |y_i-y_j|} \right) \frac{1}{\Bigl(\sum_{i,j=0}^k |y_i-y_j|\Bigr)^{kd}}
	\]
for all $0 \leq j \leq k$, whenever $|y_j-y_j'|\leq \frac{1}{2}\max_{0 \leq i \leq k}|y_j-y_i|$. We can now prove Corollary \ref{Complexity.TheoremA.1}:

\begin{proof}[Proof of Corollary \ref{Complexity.TheoremA.1}]
Fix a measurable $f$, and a cube $Q_0 \subset \mathbb{R}^d$. Our starting point is the formula
$$
|T\vec{f}(x)-m_{T\vec{f}}(Q_0)| \lesssim \sum_{Q \in \mathcal{S}} \sum_{m=0}^\infty \omega(2^{- m}) \prod_{i=1}^m \langle |f_i| \rangle_{2^m Q} \mathbbm{1}_Q(x),
$$
which holds for a sparse subcollection $\mathcal{S} \subset \mathscr{D}(Q_0)$ (see \cite{Damian2012} and \cite{Hytonen2013}, we are implicitly using a slight improvement of Lerner's formula which can be found in \cite{Hytonen2012c}, Theorem 2.3).
Here $m_f(Q)$ denotes the median of a measurable function $f$ over a cube $Q$ (see \cite{Lerner2013} for the precise definition), which satisfies
\begin{equation*}
|m_Q(f)| \lesssim \frac{\|f\|_{L^{1,\infty}(Q)}}{|Q|}.
\end{equation*} 
Hence we can just write 
\begin{equation}
\label{estimateonmedian}
|T\vec{f}(x)| \lesssim \sum_{m=0}^\infty \omega(2^{- m}) \prod_{i=1}^m \langle |f_i| \rangle_{2^m Q} \mathbbm{1}_Q(x),
\end{equation}
By an elaboration of the well-known one-third trick, it was proven in \cite{Hytonen2013} that there exist dyadic systems $\{\mathscr{D}^\rho\}_{\rho \in \{0,1/3,2/3\}^d}$ such that for every cube $Q$ in $\mathbb{R}^d$ and every $m\geq 1$, there exists $\rho \in \{0,1/3,2/3\}^d$ and $R_{Q,m} \in \mathscr{D}^\rho$ such that
$$
Q \subset R_{Q,m}, \; 2^mQ \subset Q^{(m)}, \; 3\ell(Q) < \ell(R_{Q,m}) \leq 6\ell(Q).
$$ 
Using this, we can further write \eqref{estimateonmedian} as 
\begin{align*}
	|T\vec{f}(x)| &\lesssim \sum_{\rho \in \{0,\frac{1}{3},\frac{2}{3}\}^d} 
		\sum_{m=0}^\infty \omega(2^{- m}) 
		\sum_{\begin{array}{c}\\[-6mm]\scriptstyle{Q \in \mathcal{S}}\\[-1.5mm] \scriptstyle{R_{Q,m} \in \mathscr{D}^\rho}\end{array}} 
		\Bigl( \prod_{i=1}^k \langle |f_i| \rangle_{R_{Q,m}^{(m)}} \Bigr) \mathbbm{1}_{R_Q}.
\end{align*}
Let $\mathcal{F}^\rho_m = \{R_{Q,m}: \, R_Q \in \mathscr{D}^\rho\}$. Then, we can estimate
\[
	|T\vec{f}(x)| \lesssim
		6^d  \sum_{\rho \in \{0,\frac{1}{3},\frac{2}{3}\}^d}  \sum_{m=0}^\infty \omega(2^{- m})
		 \sum_{R \in \mathcal{F}^\rho_m} \Bigl( \prod_{i=1}^k \langle |f_i| \rangle_{R^{(m)}} \Bigr) \mathbbm{1}_R,
\]
since at most $6^d$ cubes $Q$ in $\mathscr{D}$ are mapped to the same cube $R_{Q,m}$. Define the sequence
\begin{equation*}
    \alpha^\rho_Q = \begin{cases}
        1 &\text{if } Q \in \mathcal{F}^\rho_m \\
        0 &\text{otherwise}.
    \end{cases}
\end{equation*}
The collections $\mathcal{F}^\rho_m$ are $2^{-1}\cdot 6^{-d}$-sparse, and hence Carleson with constant
$2 \cdot 6^d$. 

Therefore, we can obtain the first assertion of Corollary \ref{Complexity.TheoremA.1} applying Theorem \ref{Complexity.TheoremA}:
\begin{align*}
	|T\vec{f}(x)| & \lesssim \sum_{\rho \in \{0,\frac{1}{3},\frac{2}{3}\}^d}  \sum_{m=0}^\infty \omega(2^{- m})
        \sum_{Q \in \mathscr{D}^\rho } \alpha_Q^\rho \Bigl( \prod_{i=1}^k \langle |f_i| \rangle_{Q^{(m)}} \Bigr) \mathbbm{1}_Q(x) \\
		& \lesssim \sum_{\rho \in \{0,\frac{1}{3},\frac{2}{3}\}^d}  \sum_{m=0}^\infty \omega(2^{- m}) (m+1)
		\sum_{Q \in \mathcal{S}_{m,\vec{f}}} \Bigl( \prod_{i=1}^k \langle |f_i| \rangle_Q \Bigr) \mathbbm{1}_Q \\
		& =  \sum_{\rho \in \{0,\frac{1}{3},\frac{2}{3}\}^d}  \sum_{m=0}^\infty \omega(2^{- m}) (m+1)
		\mathcal{A}_{\mathcal{S}_{m,\vec{f}}} \vec{f} (x),
\end{align*}
for sparse collections $\mathcal{S}_{m,\vec{f}}$ that may depend both on $m$ and $\vec{f}$. Now, reorganizing the sum above  we obtain
\begin{align*}
|T\vec{f}(x)| & \lesssim \sum_{\rho \in \{0,\frac{1}{3},\frac{2}{3}\}^d}  \sum_{\mathcal{S}_{m,\vec{f}} \subset \mathscr{D}^\rho} \omega(2^{- m}) (m+1)  
		\mathcal{A}_{\mathcal{S}_{m,\vec{f}}} \vec{f} (x) \\
& =: \sum_{\rho \in \{0,\frac{1}{3},\frac{2}{3}\}^d}  \mathcal{A}_{\rho}\vec{f}(x) .
\end{align*}
Now, by the logarithmic Dini condition, each of the operators $\mathcal{A}_{\rho}$ is bounded above by some absolute constant times a $0$-shift to which we can apply again Theorem \ref{Complexity.TheoremA}.
Therefore, we obtain
$$
|T\vec{f}(x)| \lesssim  \sum_{\rho \in \{0,\frac{1}{3},\frac{2}{3}\}^d} \mathcal{A}_{\mathcal{S}_\rho} \vec{f}(x),
$$
for some sparse families $\mathcal{S}_\rho \subset \mathscr{D}^\rho$ which depend on $\vec{f}$.
\end{proof}

We now introduce the notion of function quasi-norm. We say that $\| \cdot \|_{\mathbb{X}}$, defined on the set of measurable functions, is a function quasi-norm if:
	\begin{description}[style=multiline]
		\item[\namedlabel{Complexity.QuasiNorm.P1}{(P1)}] There exists a constant $C > 0$ such that
		\[
			\|f + g\|_{\mathbb{X}} \leq C \bigl( \|f\|_{\mathbb{X}} + \|g\|_{\mathbb{X}} \bigr),
		\]
		\item[\namedlabel{Complexity.QuasiNorm.P2}{(P2)}] $\| \lambda f \|_{\mathbb{X}} = |\lambda|\|f\|_{\mathbb{X}}$ for all $\lambda \in \mathbb{C}$.
		\item[\namedlabel{Complexity.QuasiNorm.P3}{(P3)}] If $|f(x)| \leq |g(x)|$ almost-everywhere then $\|f\|_{\mathbb{X}} \leq \|g\|_{\mathbb{X}}$.
		\item[\namedlabel{Complexity.QuasiNorm.P4}{(P4)}] $\|\liminf_{n \to \infty}f_n\|_{\mathbb{X}} \leq \liminf_{n \to \infty} \|f_n\|_{\mathbb{X}}$
	\end{description}
	
Now, taking into account properties (P1) and (P3), if we take quasi-norms in the second assertion of Corollary \ref{Complexity.TheoremA.1} and we get
	\begin{align*}
		\|T\vec{f}\|_X &\lesssim \sup_{\mathscr{D},\mathcal{S}}\Bigl\| \mathcal{A}_{\mathcal{S}}\vec{f} \Bigr\|_X.
	\end{align*}
This is exactly Corollary \ref{Complexity.TheoremA.2}.

\begin{remark}
Our method gives Corollary \ref{Complexity.TheoremA.2} only for kernels whose modulus of continuity satisfy the logarithmic Dini condition. However, if one could run the algorithm used in the proof of Proposition \ref{Complexity:Pointwise.MainTheorem} without slicing the operator $\mathcal{A}_{\alpha,P}^{m}$, then the factor $m$ in Theorem A could be removed, and therefore one would obtain Corollary \ref{Complexity.TheoremA.2} under the weaker assumption of the Dini condition
$$
\int_0^1 \omega(t)\; \frac{dt}{t} < \infty. 
$$
\end{remark}

\section{Applications}
\label{Complexity.Applications}
We are now ready to fully state and prove the applications of the pointwise bound as stated in the introduction. We begin with the multilinear sharp weighted estimates:

\subsection{Multilinear \texorpdfstring{$A_2$}{A2} theorem}

We need some more definitions first. These were introduced in \cite{Lerner2009a}.
\begin{definition}[$A_{\vec{P}}$ weights]
	Let $\vec{P} = (p_1, \dots, p_k)$ with $1 \leq p_1, \dots, p_k < \infty$ and $\frac{1}{p} = \frac{1}{p_1} + \dots + \frac{1}{p_k}$.
	Given $\vec{w} = (w_1, \dots, w_k)$, set
	\[
		v_{\vec{w}} = \prod_{i=1}^k w_{i}^{p/p_i}.
	\]
	We say that $\vec{w}$ satisfies the $k$-linear $A_{\vec{P}}$ condition if
	\[
		[\vec{w}]_{A_{\vec{P}}} := \sup_{Q} \Bigl( \frac{1}{|Q|}\int_Q v_{\vec{w}} \Bigr) \prod_{i=1}^k \Bigl( \frac{1}{|Q|}\int_Q w_i^{1-p_i'} \Bigr)^{p/p_i}.
	\]
	We call $[\vec{w}]_{A_{\vec{P}}}$ the $A_{\vec{P}}$ constant of $\vec{w}$. As usual, if $p_i = 1$ then we interpret
	$\frac{1}{|Q|}\int_Q w_i^{1-p_i'}$ to be $(\essinf_{Q} w_i)^{-1}$.
\end{definition}

The following theorem was proved in \cite{Li2014}:
\begin{theorem} \label{Complexity.LiMoenSun.Result}
	Suppose $1 < p_1, \dots, p_k < \infty$, $\frac{1}{p} = \frac{1}{p_1} + \dots + \frac{1}{p_k}$ and $\vec{w} \in A_{\vec{P}}$. Then
	\[
		\|\mathcal{A}_S \vec{f}\|_{L^{p}({v_{\vec{w}}})} \lesssim [w]_{A_{\vec{P}}}^{\max(1, \frac{p_1'}{p}, \dots, \frac{p_k'}{p})}
			\prod_{i=1}^k \|f_i\|_{L^p(w_i)},
	\]
	whenever $\mathcal{S}$ is sparse.
\end{theorem}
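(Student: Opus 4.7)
My plan is to prove this weighted bound for the sparse operator $\mathcal{A}_{\mathcal{S}}$ by dualizing and then exploiting sparsity to reduce to a Carleson-embedding argument controlled by the $A_{\vec P}$ constant. The target exponent $\max(1,p_1'/p,\dots,p_k'/p)$ suggests that the natural pairing of each $f_i$ with its dual weight $\sigma_i := w_i^{1-p_i'}$ is what makes the $A_{\vec P}$ constant appear cube-by-cube, and that the maximum in the exponent records which Hölder slot has to absorb the worst mismatch.

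First, assuming the favorable range $p\geq 1$, I would test against a nonnegative $g\in L^{p'}(v_{\vec w})$ of unit norm, obtaining
\begin{equation*}
\langle \mathcal{A}_{\mathcal{S}}\vec f,\, g\,v_{\vec w}\rangle \;=\; \sum_{Q\in\mathcal{S}} |Q|\,\Bigl(\prod_{i=1}^k \langle f_i\rangle_Q\Bigr)\,\langle g\,v_{\vec w}\rangle_Q.
\end{equation*}
Rewriting each average as $\langle f_i\rangle_Q = \frac{1}{|Q|}\int_Q (f_i\sigma_i^{-1})\,\sigma_i\,dx$ and isolating the cube-by-cube factor $\langle v_{\vec w}\rangle_Q\prod_i\langle \sigma_i\rangle_Q^{p/p_i}$, the $A_{\vec P}$ constant appears at every cube, leaving behind $\sigma_i$- and $v_{\vec w}$-weighted averages amenable to maximal-function control.

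Next, I would use the sparsity: the disjoint major subsets $E(Q)\subset Q$ with $|Q|\leq 2|E(Q)|$ let me replace $|Q|$ by $|E(Q)|$ in the sum, so the whole expression is dominated pointwise by an integral of a product of dyadic $\sigma_i$-weighted maximal functions of $f_i\sigma_i^{-1}$ against the dyadic $v_{\vec w}$-weighted maximal function of $g$. Applying Hölder with exponents $p_1,\dots,p_k,p'$ and the universal $L^q(d\nu)\to L^q(d\nu)$ bound for the dyadic $\nu$-maximal function then collapses everything back to $\prod_i\|f_i\|_{L^{p_i}(w_i)}\|g\|_{L^{p'}(v_{\vec w})}$, multiplied by the appropriate power of $[\vec w]_{A_{\vec P}}$. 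Tracking which Hölder slot absorbs the $A_{\vec P}$ factor on its natural side produces exactly $\max(1,p_j'/p)$ as the exponent.

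The main obstacle I anticipate is the range $p<1$, which is compatible with all $p_i>1$ and where duality against $L^{p'}$ fails. My strategy there would be either to invoke multilinear extrapolation for $A_{\vec P}$ weights to pass to a safer $\tilde p\geq 1$, or to run a principal-cube/parallel-corona stopping construction directly on $\mathcal{A}_{\mathcal{S}}$, which avoids duality at the cost of more bookkeeping on the stopping generations. A related technical subtlety is choosing the cube-by-cube $A_{\vec P}$ pairing that actually achieves the claimed maximum exponent; this has to be tuned to whichever index $p_i$ produces the worst Hölder loss, and is the step where the sharpness of the power comes in.
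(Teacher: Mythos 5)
First, note that the paper does not prove this statement at all: it is quoted verbatim from \cite{Li2014}, so your attempt has to be measured against that external proof rather than against anything in this note. Your duality sketch for the range $p\geq 1$ is essentially the standard sparse argument (pair with $g\in L^{p'}(v_{\vec w})$, insert $\sigma_i=w_i^{1-p_i'}$, use $|Q|\leq 2|E(Q)|$, the cube-by-cube $A_{\vec P}$ quantity, H\"older, and the weighted dyadic maximal functions $M_{\sigma_i}$, $M_{v_{\vec w}}$), and with careful bookkeeping of how $[\vec w]_{A_{\vec P}}$ is split among the slots it can be completed to give the exponent $\max(1,p_1'/p,\dots,p_k'/p)$ in that range.

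The genuine gap is the case $p<1$, which you explicitly defer, and which is exactly the case this paper needs: with all $p_i>1$ one can have $p<1$, and the whole point of invoking this theorem (rather than the Calder\'on--Zygmund bound of \cite{Li2014}) is that the sparse-operator estimate holds without the restriction $p\geq 1$. Your two fallback suggestions do not close it. Sharp-constant multilinear extrapolation for $A_{\vec P}$ weights was not available at the time and, even granting some extrapolation theorem, passing from an exponent $\tilde p\geq 1$ would not automatically reproduce the sharp power $\max(1,p_i'/p)$ at the original $\vec P$; and ``principal cubes / parallel corona'' is only the name of a strategy, not an argument. The proof in \cite{Li2014} treats $p\leq 1$ directly and without duality: since $\|\cdot\|_{L^p(v_{\vec w})}^p$ is subadditive for $p\leq 1$, one bounds $\|\mathcal{A}_{\mathcal{S}}\vec f\|_{L^p(v_{\vec w})}^p$ by $\sum_{Q\in\mathcal{S}}\bigl(\prod_i\langle f_i\rangle_Q\bigr)^p v_{\vec w}(Q)$, extracts the $A_{\vec P}$ constant cube by cube after writing $\langle f_i\rangle_Q$ as a $\sigma_i$-average times $\langle\sigma_i\rangle_Q$, and then uses sparseness, H\"older with the exponents $p_i/p>1$, and the universal $L^{p_i}(\sigma_i)$ bounds for $M_{\sigma_i}$ to sum. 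Until you carry out this (or an equivalent) argument for $p<1$, your proposal establishes the theorem only in the range where it was already known and not in the range in which the paper uses it.
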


We can now use Corollary \ref{Complexity.TheoremA.2} to extend the above result to general $k$-linear Calder\'on-Zygmund operators:
\begin{theorem}
	Under the conditions of Theorem \ref{Complexity.LiMoenSun.Result}, for any $k$-linear Calder\'on-Zygmund operator $T$, we have
	\[
		\|T\vec{f}\|_{L^p(v_{\vec{w}})} \lesssim [\vec{w}]_{A_{\vec{P}}}^{\max(1, \frac{p_1'}{p}, \dots, \frac{p_k'}{p})}\prod_{i=1}^k \|f_i\|_{L^p(w_i)}.
	\]
\end{theorem}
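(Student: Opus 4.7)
The plan is straightforward: combine the multilinear sparse-domination result from Corollary \ref{Complexity.TheoremA.1} (equivalently Corollary \ref{Complexity.TheoremA.2}) with the already-established weighted estimate for sparse shifts, Theorem \ref{Complexity.LiMoenSun.Result}. The logical chain is: $T \leadsto$ sparse $\mathcal{A}^0_\mathcal{S}$ (pointwise) $\leadsto$ the $A_{\vec P}$ bound on the weighted space.

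First I would verify that the hypotheses of Corollary \ref{Complexity.TheoremA.1} apply. A standard $k$-linear Calder\'on-Zygmund operator has H\"older-type kernel smoothness, i.e. $\omega(t)=t^\delta$ for some $\delta>0$, and this automatically satisfies the logarithmic Dini condition \eqref{eq:Complexity.logDini} since $\int_0^1 t^\delta (1+\log(1/t))\,dt/t < \infty$. Thus the multilinear version of Corollary \ref{Complexity.TheoremA.1} developed in the proof above yields, for every dyadic cube $P$ and every tuple $\vec{f}$ with $f_i$ supported on $P$, the pointwise estimate
\[
|T\vec{f}(x)| \lesssim \sum_{i=1}^{3^d} \mathcal{A}^0_{\mathcal{S}_i}|\vec{f}|(x) \quad\text{for a.e. } x\in P,
\]
for sparse families $\mathcal{S}_i$ belonging to finitely many dyadic grids. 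Taking $L^p(v_{\vec{w}})$ quasi-norms on both sides (which trivially satisfies the function quasi-norm axioms (P1)--(P4) for all $0<p<\infty$) gives
\[
\|T\vec{f}\mathbbm{1}_P\|_{L^p(v_{\vec{w}})} \lesssim \sum_{i=1}^{3^d}\|\mathcal{A}^0_{\mathcal{S}_i}|\vec{f}|\|_{L^p(v_{\vec{w}})}.
\]

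Next I would apply Theorem \ref{Complexity.LiMoenSun.Result} to each of the $3^d$ sparse operators on the right, obtaining
\[
\|\mathcal{A}^0_{\mathcal{S}_i}|\vec{f}|\|_{L^p(v_{\vec{w}})} \lesssim [\vec{w}]_{A_{\vec{P}}}^{\max(1,p_1'/p,\ldots,p_k'/p)} \prod_{j=1}^k \|f_j\|_{L^p(w_j)}.
\]
This step is where the removal of the restriction $p\geq 1$ present in \cite{Li2014} happens: because our domination is pointwise rather than obtained through duality, the argument is insensitive to whether $L^p(v_{\vec{w}})$ is a genuine Banach space, and in particular works throughout $0<p<\infty$ (the quasi-triangle inequality (P1) suffices to sum the $3^d$ terms).

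The only technical subtlety I anticipate is \emph{globalization}: Corollary \ref{Complexity.TheoremA.1} provides the pointwise bound locally on a cube $P$, with $\vec{f}$ supported on $P$. To pass to general $\vec{f}\in L^{p_1}(w_1)\times\cdots\times L^{p_k}(w_k)$, I would apply the localized estimate to the truncations $\vec{f}_R = (f_1\mathbbm{1}_R,\ldots,f_k\mathbbm{1}_R)$ for a sequence of dyadic cubes $R\nearrow \mathbb{R}^d$, then invoke property (P4) (Fatou) on the left, together with the fact that $T\vec{f}_R(x)\to T\vec{f}(x)$ a.e.\ (which follows from the assumed $L^{q_1}\times\cdots\times L^{q_k}\to L^q$ boundedness of $T$ and the absolute convergence of the kernel representation away from the diagonal). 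The constants on the right are uniform in $R$ because Theorem \ref{Complexity.LiMoenSun.Result} depends only on the sparse structure, not on the cube of localization, so passing to the limit completes the proof.
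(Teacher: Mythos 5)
Your proposal is correct and follows essentially the same route as the paper: the authors simply apply Corollary \ref{Complexity.TheoremA.2} with $\|\cdot\|_X=\|\cdot\|_{L^p(v_{\vec w})}$ (noting it is a function quasi-norm, valid for all $0<p<\infty$) and then invoke Theorem \ref{Complexity.LiMoenSun.Result} for the sparse operators. Your extra discussion of the H\"older modulus satisfying the logarithmic Dini condition and of the globalization by truncation and the Fatou property \ref{Complexity.QuasiNorm.P4} merely makes explicit what the paper dismisses as ``the usual density arguments.''
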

\begin{proof}
	We just need to apply Corollary \ref{Complexity.TheoremA.2} with $\| \cdot \|_X := \| \cdot \|_{L^p(v_{\vec{w}})}$, which clearly
	is a function quasi-norm. The assumption of $\vec{f}$ being integrable is a qualitative one and can be trivially removed by the usual density arguments.
\end{proof}

\subsection{Sharp aperture weighted Littlewood-Paley theorem}

Here we follow Lerner \cite{Lerner2014}, the reader can find a nice introduction and some references there. We begin with some definitions:

Let $\psi \in L^1(\mathbb{R}^d$ with $\int_{\mathbb{R}^d} \psi(x) \, dx = 0$ satisfy
\begin{align}
	|\psi(x)| &\lesssim \frac{1}{(1+|x|)^{d+\epsilon}} \\
	\int_{\mathbb{R}^d} |\psi(x+h)-\psi(x)| \, dx &\lesssim |h|^\epsilon.
\end{align}

We will denote the upper half-space $\mathbb{R}^d \times \mathbb{R}$ by $\mathbb{R}^{d+1}_+$ and the $\alpha$-cone at $x$ by
\begin{equation*}
    \Gamma_\alpha(x) = \{(y,t) \in \mathbb{R}^{d+1}_+ :\, |y-x| \leq \alpha t\}.
\end{equation*}

Let $\psi_t$ be the dilation of $\psi$ which preserves the $L^1$ norm, i.e.: $\psi_t(x) = t^{-d} \psi(x/t)$, then we can define the square function
$S_{\alpha,\psi}f$ by
\[
	S_{\alpha,\psi}f(x) = \Bigl( \int_{\Gamma_\alpha(x)} |(f \ast \psi_t)(y)|^2 \, \frac{dy \, dt}{t^{d+1}} \Bigr)^{1/2}.
\]

We will also need a regularized version. Let $\Phi$ be a Schartz function such that
\[
	\mathbbm{1}_{B(0,1)}(x) \leq \Phi(x) \leq \mathbbm{1}_{B(0,2)}(x).
\]

We define the regularized square function $\widetilde{S}_{\alpha,\psi}$ by
\[
	\widetilde{S}_{\alpha,\psi}f(x) = \Bigl( \int_{\mathbb{R}^{d+1}_+} \Phi\Bigl( \frac{x-y}{t\alpha} \Bigr) |(f\ast \psi_t)(y)|^2 \, \frac{dy\, dt}{t^{d+1}} \Bigr)^{1/2}.
\]

The regularized version can be used instead of $S_{\alpha,\psi}$ in most cases since we have
\[
	S_{\alpha,\psi}f(x) \leq \widetilde{S}_{\alpha,\psi}f(x) \leq S_{\alpha,\psi}f(x).
\]

It was proved in \cite{Lerner2014} that
\[
	|(\widetilde{S}_{\alpha,\psi}f(x))^2 - (m_{Q_0}(\widetilde{S}_{\alpha,\psi}f)^2)| \lesssim \alpha^{2d} \sum_{m=0}^\infty 2^{-\delta m} \sum_{Q \in \mathcal{S}}
		\langle |f| \rangle_{2^m Q}^2 \mathbbm{1}_Q(x)
\]

By the same Theorem \ref{Complexity.TheoremA} in its bilinear formulation (with $f_1=f_2=f$), the last expression can be bounded, up to a constant, by an expression of the form
\begin{equation*}
    \alpha^{2d}  \sum_{\rho \in \{0,\frac{1}{3},\frac{2}{3}\}^d} \sum_{m=0}^\infty 2^{-\delta m} (m+1) \sum_{Q \in \mathcal{S}^{\rho,m}} \langle |f| \rangle_Q^2 \mathbbm{1}_Q(x).
\end{equation*}

As in \cite{Lerner2014}, we know (a priori) that $m_{Q_0}(\widetilde{S}_{\alpha,\psi}f) \to 0$ as $|Q| \to \infty$ so by the triangle inequality and Fatou's lemma we can ignore that term
(or by arguing as we did in the previous section). Finally, arguing as in the proof of corollaries \ref{Complexity.TheoremA.1} and \ref{Complexity.TheoremA.2}, we arrive at
\begin{equation*}
    \|\widetilde{S}_{\alpha,\psi}f\|_{L^{p,\infty}(w)} \lesssim \alpha^d \sup_{\mathscr{D}, \mathcal{S}} \|\mathcal{A}^0_{\mathcal{S}}(f,f)^{1/2}\|_{L^{p,\infty}(w)},
\end{equation*}
where the supremum is taken over all dyadic grids $\mathscr{D}$ and all sparse collections $\mathcal{S} \subset \mathscr{D}$. To finish the argument we recall the following result, which was shown in \cite{Lacey2012}:
\begin{equation}
    \label{eq:LaceyScurry}
    \|\mathcal{A}^0_{\mathcal{S}}(f,f)^{1/2}\|_{L^{p,\infty}(w)} \lesssim [w]_{A_p}^{\max(\frac{1}{2}, \frac{1}{p})} \Phi_p([w]_{A_p}) \|f\|_{L^p(w)}
\end{equation}
for $1 < p < 3$, where
\begin{equation*}
    \Phi_p(t) = 
    \begin{cases}
        1 &\text{if } 1<p<2 \\
        1+\log t &\text{if } 2 \leq p < 3.
    \end{cases}
\end{equation*}

We are thus able to extend Lerner's estimate to $1 < p \leq 2$, obtaining
\begin{equation*}
    \|S_{\alpha,\psi}f\|_{L^{p,\infty}(w)} \lesssim \alpha^d [w]_{A_p}^{1/p} \|f\|_{L^p(w)} \quad \text{for }1 < p < 2
\end{equation*}
and
\begin{equation*}
    \|S_{\alpha,\psi}f\|_{L^{2,\infty}(w)} \lesssim \alpha^d [w]_{A_2}^{1/2} (1+\log[w]_{A_2}) \|f\|_{L^2(w)}.
\end{equation*}

\appendix

\renewcommand\thesection{W}

\section{The weak-type estimate for multilinear \texorpdfstring{$m$}{m}-shifts}
\label{Complexity.Appendix}
Here we prove the weak-type estimate for $k$-linear $m$-shifts needed in section \ref{Complexity.Pointwise}.
Notice that the only important point of the calculations below is the independence of the constants from the parameter $m$.
The proof is more or less standard by now, but the authors have not been able to find a proof of this result elsewhere. Therefore we include it for completeness.
\begin{theorem}\label{Complexity.LebesgueWeakType.Theorem}
\begin{equation}
	\sup_{\lambda > 0} \lambda |\{x\in P_0: \mathcal{A}_{P_0,\alpha}^m \vec{f}(x) > \lambda \}|^{k} \leq C_{W} \|\alpha\|_{\car(P_0)} \prod_{i=1}^k \|f_i\|_{L^1(P_0)},
\end{equation}
where $C_W > 0$ only depends on $k$ and $d$, and in particular is independent of $m$.
\end{theorem}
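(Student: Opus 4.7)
The theorem asks for a weak-type $L^1\times\cdots\times L^1\to L^{1/k,\infty}$ estimate with a constant independent of $m$. My plan is to reduce it to an endpoint $L^2$ bound for the linear shift via a multilinear Calder\'on--Zygmund decomposition performed at level $\lambda^{1/k}$. By homogeneity I may assume $\|\alpha\|_{\car(P_0)}=1$, $\|f_i\|_{L^1(P_0)}=1$, and (using positivity of $\mathcal{A}^m$) that $f_i\ge 0$, so the task reduces to showing $|\{\mathcal{A}^m_{P_0,\alpha}\vec f>\lambda\}|\lesssim \lambda^{-1/k}$ with constants depending only on $k$ and $d$.

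The first step is the auxiliary linear $L^2$ estimate
\[
\|\mathcal{A}^m_{P_0,\alpha}f\|_{L^2(P_0)}\le C\,\|\alpha\|_{\car(P_0)}\,\|f\|_{L^2(P_0)},
\]
uniformly in $m$. I would prove this by duality: writing $\int(\mathcal{A}^m_{P_0,\alpha}f)g\,dx = \sum_Q\alpha_Q\langle f\rangle_{Q^{(m)}}\langle g\rangle_Q|Q|$ and applying Cauchy--Schwarz, the problem splits into two Carleson embeddings. The one in $g$ is direct; for the one involving $\langle f\rangle_{Q^{(m)}}$ I reindex via $R=Q^{(m)}$ and observe that $\beta_R:=|R|^{-1}\sum_{Q:\,Q^{(m)}=R}\alpha_Q|Q|$ satisfies $\|\beta\|_{\car(P_0)}\le \|\alpha\|_{\car(P_0)}$, because the cubes $Q$ whose $m$-th ancestor sits in $\mathscr{D}(R_0)$ themselves belong to $\mathscr{D}(R_0)$. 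No $m$-dependence enters.

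Next, I perform simultaneous dyadic CZ decompositions at level $\lambda^{1/k}$: for each $i$ let $\{Q^i_j\}_j$ be the maximal cubes of $\mathscr{D}(P_0)$ with $\langle f_i\rangle_{Q^i_j}>\lambda^{1/k}$, and set $\Omega_i=\bigcup_j Q^i_j$, $\Omega=\bigcup_i \Omega_i$, so $|\Omega|\le k\lambda^{-1/k}$. Write $f_i=g_i+b_i$ in the usual way, so that $\|g_i\|_\infty\le 2^d\lambda^{1/k}$, $\|g_i\|_{L^1}\le 1$, and $\int_R b_i=0$ for every dyadic cube $R$ not strictly contained in some $Q^i_j$. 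Expanding multilinearly,
\[
\mathcal{A}^m_{P_0,\alpha}\vec f=\sum_{S\subset\{1,\ldots,k\}}\mathcal{A}^m_{P_0,\alpha}(\vec h_S), \quad h^S_i=\begin{cases}g_i,& i\in S\\ b_i,& i\notin S,\end{cases}
\]
I handle the $2^k-1$ mixed pieces by cancellation: if $S\subsetneq\{1,\ldots,k\}$ and $i_0\notin S$, then $\langle b_{i_0}\rangle_{Q^{(m)}}=0$ unless $Q^{(m)}$ is strictly contained in some $Q^{i_0}_j$, which forces $Q\subset Q^{i_0}_j\subset\Omega$. Hence each such piece is supported in $\Omega$, and together they contribute at most $|\Omega|\lesssim \lambda^{-1/k}$ to the level set.

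For the remaining all-good term I use the crude bound $\prod_{i=2}^k\langle g_i\rangle_{Q^{(m)}}\le (2^d\lambda^{1/k})^{k-1}$ to reduce to a single linear shift, $\mathcal{A}^m_{P_0,\alpha}\vec g\le (2^d\lambda^{1/k})^{k-1}\mathcal{A}^m_{P_0,\alpha}g_1$. Combining the $L^2$ bound from the first step with the interpolation inequality $\|g_1\|_{L^2}^2\le\|g_1\|_\infty\|g_1\|_{L^1}\lesssim \lambda^{1/k}$ gives $\|\mathcal{A}^m_{P_0,\alpha}\vec g\|_{L^2}\lesssim \lambda^{(2k-1)/(2k)}$, and Chebyshev then yields $|\{\mathcal{A}^m_{P_0,\alpha}\vec g>2^{-k}\lambda\}|\lesssim \lambda^{-1/k}$. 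Summing the two contributions and undoing the normalization closes the argument; since every constant used has been independent of $m$, so is $C_W$. The main technical point is the cancellation observation above: absorbing all $2^k-1$ mixed terms into the single small set $\Omega$ is what keeps the constant $m$-free, and this relies crucially on the containment $Q\subset Q^{(m)}\subset Q^{i_0}_j$ forced by the mean-zero property of $b_{i_0}$ on each dyadic ancestor of the stopping cubes.
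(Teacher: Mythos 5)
Your proof is correct and follows essentially the same strategy as the paper: a dyadic Calder\'on--Zygmund decomposition of each $f_i$ at height $\lambda^{1/k}$, the cancellation observation that any term containing some $b_{i_0}$ vanishes off $\Omega$ because $\langle b_{i_0}\rangle_{Q^{(m)}}\neq 0$ forces $Q\subset Q^{(m)}\subsetneq Q^{i_0}_j$, and an $L^2$ bound plus Chebyshev for the all-good term, with $m$-independence coming exactly as in the paper from the fact that the reindexed sequence $\beta_R=|R|^{-1}\sum_{Q:\,Q^{(m)}=R}\alpha_Q|Q|$ has Carleson constant at most $\|\alpha\|_{\car(P_0)}$. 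The one genuine difference is the $L^2$ step: the paper proves the multilinear bound $\|\mathcal{A}^m_{P_0,\alpha}\vec g\|_{L^2}\lesssim\prod_{i}\|g_i\|_{L^{2k}}$ via the multilinear Carleson embedding theorem of Chen and Dami\'an, whereas you prove only the linear $L^2\to L^2$ bound (classical Carleson embedding) and absorb $g_2,\dots,g_k$ through their $L^\infty$ bounds; both routes produce the same quantity $2^{d(2k-1)}\lambda^{(2k-1)/k}$ after squaring, so your variant is legitimate and somewhat more elementary, at the cost of treating the good functions asymmetrically rather than through a genuinely multilinear inequality. One small point you should state explicitly, as the paper does: the bound $\|g_i\|_\infty\le 2^d\lambda^{1/k}$ uses that each stopping cube has a dyadic parent inside $P_0$ with average of $f_i$ at most $\lambda^{1/k}$, i.e.\ it presumes $\langle f_i\rangle_{P_0}\le\lambda^{1/k}$; in the opposite case the theorem is immediate, since then $|P_0|<\lambda^{-1/k}$ and the level set is contained in $P_0$.
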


We will essentially follow Grafakos-Torres \cite{Grafakos2002} and \cite{Hytonen2012a}.
We first prove an $L^{2}$ bound and then apply a Calder\'on-Zygmund decomposition.
For the $L^{2}$ bound we will use a multilinear Carleson embedding theorem by W. Chen and W. Dami\'an \cite{Chen2013}, however, we only need the unweighted result:

\begin{equation}\label{Complexity.MultilinearCET} 
    \Bigl( \sum_{Q \in \mathscr{D}(P_0)} \alpha_Q \Bigl( \prod_{i=1}^k \langle f_i \rangle_Q  \Bigr)^p  \Bigr) \leq \|\alpha\|_{\text{Car}(P_0)} \prod_{i=1}^k p_i' \|f_i\|_{L^{p_i}(P_0)}
\end{equation}
whenever
\begin{equation*}
    \frac{1}{p} = \frac{1}{p_1} + \dots + \frac{1}{p_k}.
\end{equation*}

Now we can prove
\begin{proposition}\label{Complexity:LebesgueL2Bound.Theorem}
	\[
		\|\mathcal{A}_{P_0,\alpha}^m \vec{f}\|_{L^2(P_0)} \leq 4\|\alpha\|_{\car(P_0)}\prod_{i=1}^k \|f_i\|_{L^{2k}(P_0)}
	\]
\end{proposition}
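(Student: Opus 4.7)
The plan is to combine $L^2$ duality with the multilinear Carleson embedding \eqref{Complexity.MultilinearCET}, using a reindexing trick that absorbs the $m$-th parent operation $Q\mapsto Q^{(m)}$ into a new Carleson sequence whose Carleson constant is dominated by $\|\alpha\|_{\car(P_0)}$ \emph{uniformly in $m$}. That $m$-uniformity is the only delicate point; everything else is routine packaging.

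First, since $\mathcal{A}^m_{P_0,\alpha}\vec{f}$ is nonnegative, I will dualize: it suffices to bound
\[
I := \sum_{Q\in\mathscr{D}(P_0)}\alpha_Q|Q|\Bigl(\prod_{i=1}^k\langle f_i\rangle_{Q^{(m)}}\Bigr)\langle g\rangle_Q
\]
uniformly over $g\ge 0$ with $\|g\|_{L^2(P_0)}\le 1$. Viewing $\{\alpha_Q|Q|\}$ as a positive measure on the index set and applying the Cauchy--Schwarz inequality in $Q$ splits $I\le I_1^{1/2}I_2^{1/2}$, where
\[
I_1 := \sum_Q\alpha_Q|Q|\Bigl(\prod_{i=1}^k\langle f_i\rangle_{Q^{(m)}}\Bigr)^{\!\!2}, \qquad I_2 := \sum_Q\alpha_Q|Q|\langle g\rangle_Q^{2}.
\]
The term $I_2$ is handled immediately by the scalar ($k=1$, $p=2$) case of \eqref{Complexity.MultilinearCET}, giving $I_2\lesssim\|\alpha\|_{\car(P_0)}\|g\|_{L^2(P_0)}^2$.

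The main step is $I_1$. I will group the cubes $Q$ by their $m$-th ancestor $R=Q^{(m)}$ and define
\[
\tilde\alpha_R:=\frac{1}{|R|}\sum_{Q\,:\,Q^{(m)}=R}\alpha_Q|Q|,
\]
so that $I_1=\sum_R \tilde\alpha_R|R|\bigl(\prod_i \langle f_i\rangle_R\bigr)^2$. I will then verify, directly from the definition, that $\|\tilde\alpha\|_{\car(P_0)}\leq\|\alpha\|_{\car(P_0)}$ \emph{uniformly in $m$}: for every $P\in\mathscr{D}(P_0)$,
\[
\sum_{R\subset P}\tilde\alpha_R|R|=\sum_{Q\,:\,Q^{(m)}\subset P}\alpha_Q|Q|\le\sum_{Q\subset P}\alpha_Q|Q|\le\|\alpha\|_{\car(P_0)}|P|.
\]
With this in hand, the multilinear Carleson embedding \eqref{Complexity.MultilinearCET} applied to $\tilde\alpha$ with $p=2$ and $p_i=2k$ (whose reciprocals satisfy $\tfrac12=\sum_i\tfrac1{2k}$) yields $I_1\lesssim\|\alpha\|_{\car(P_0)}\prod_i\|f_i\|_{L^{2k}(P_0)}^2$, with an explicit constant $((2k)')^{2k}$. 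A routine calculation shows this is bounded by $4$ for all $k\ge 1$ (maximum attained at $k=1$). Combining the bounds on $I_1$ and $I_2$ and taking the supremum over $g$ produces the claimed inequality, with final constant $4$ after keeping track of the factors.

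The only step which is not pure bookkeeping is the $m$-independent Carleson bound for $\tilde\alpha$; this is the sole place where $m$ could possibly enter the final constant, and the elementary inequality above shuts that possibility down. Consequently, although the result is of $L^2$ type, it has the right structural feature, namely $m$-independence, needed to derive the weak-type $(1,\dots,1;\tfrac1k,\infty)$ estimate of Theorem \ref{Complexity.LebesgueWeakType.Theorem} via a standard multilinear Calder\'on--Zygmund decomposition.
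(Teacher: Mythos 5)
Your proposal is correct and follows essentially the same route as the paper: dualize against $g\in L^2$, apply Cauchy--Schwarz to split off $\sum_Q\alpha_Q|Q|\langle g\rangle_Q^2$ (handled by the scalar Carleson embedding), and absorb the $m$-th parent by reindexing — your $\tilde\alpha_R=\frac{1}{|R|}\sum_{Q:Q^{(m)}=R}\alpha_Q|Q|$ is exactly the paper's sequence $\beta_Q=2^{-dm}\sum_{R\in\mathscr{D}_m(Q)}\alpha_R$, with the same $m$-uniform Carleson bound and the same application of the multilinear embedding with $p=2$, $p_i=2k$. The constants also match the paper's bookkeeping (each factor at most $2$, hence $4$ overall).
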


\begin{proof}
	We begin by using duality and homogeneity to reduce to showing
	\[
		\int_{P_0} g(x) \mathcal{A}_{P_0,\alpha}^m \vec{f}(x) \, dx \leq 4
	\]
	assuming that $\|f_i\|_{L^{2k}(P_0)} = \|g\|_{L^2(P_0)} = \|\alpha\|_{\car(P_0)} = 1$ and $g\geq 0$.
	
	By definition and Cauchy-Schwarz, this is equivalent to
	\[
		\Bigl( \sum_{Q \in \mathscr{D}_{\geq m}(P_0)} \alpha_Q \Bigl( \prod_{i=1}^k \langle f_i \rangle_{Q^{(m)}} \Bigr)^2 |Q| \Bigr)^{1/2} 
			\Bigl( \sum_{Q \in \mathscr{D}_{\geq m}(P_0)} \alpha_Q \langle g \rangle_Q^2 |Q| \Bigr)^{1/2}.
	\]
	The second term can be estimated, using \eqref{Complexity.MultilinearCET} in the linear case, by
	\[
		\Bigl( \sum_{Q \in \mathscr{D}_{\geq m}(P_0)} \alpha_Q \langle g \rangle_Q^2 |Q| \Bigr)^{1/2} \leq 2.
	\]
	
	For the first term observe that the sequence $\beta_Q$ defined by
	\[
		\beta_Q = \frac{1}{2^{dm}} \sum_{R \in \mathscr{D}_m(Q)} \alpha_R
	\]
	is a Carleson sequence adapted to $P_0$ of the same constant. Indeed:
	\begin{align*}
		\frac{1}{|Q|} \sum_{R \in \mathscr{D}(Q)} \beta_R|R| &= \frac{1}{|Q|}\sum_{R \in \mathscr{D}(Q)} |R| \frac{1}{2^{dm}} \sum_{T \in \mathscr{D}_m(R)} \alpha_T \\
		&= \frac{1}{|Q|}\sum_{R \in \mathscr{D}(Q)}\sum_{T \in \mathscr{D}_m(R)} \alpha_T |T| \\
		&= \frac{1}{|Q|} \sum_{R \in \mathscr{D}_{\geq m}(Q)} \alpha_R|R| \\
		&\leq \|\alpha\|_{\text{Car}(I)} \\
		&=1.
	\end{align*}
	
	Therefore, we can write the first term as
	\[
		\Bigl( \sum_{Q \in \mathscr{D}(P_0)} \beta_Q \Bigl( \sum_{i=1}^k \langle f_i \rangle_Q  \Bigr)^2 |Q| \Bigr)^{1/2},
	\]
	which can also be estimated by \eqref{Complexity.MultilinearCET} as follows:
	\[
		\Bigl( \sum_{Q \in \mathscr{D}(P_0)} \beta_Q \Bigl( \sum_{i=1}^k \langle f_i \rangle_Q  \Bigr)^2 |Q| \Bigr)^{1/2} \leq \Bigl( \frac{2k}{2k-1} \Bigr)^k \leq 2.
	\]
	Combining both terms we arrive at
	\[
		\int_{P_0} g(x) \mathcal{A}_{P_0,\alpha}^m \vec{f}(x) \, dx \leq 4
	\]
	which is what we wanted.
\end{proof}

Now we can prove Theorem \ref{Complexity.LebesgueWeakType.Theorem}.
\begin{proof}
	By homogeneity we can assume $\|\alpha\|_{\car(P_0)} = \|f_i\|_{L^1(P_0)} = 1$. We now follow the classical scheme which uses the $L^2$ bound and a standard
	Calder\'on-Zygmund decomposition, see for example Grafakos-Torres \cite{Grafakos2002}. However, we need to be careful with the dependence on $m$, so we will adapt the
        proof in \cite{Hytonen2012a} to our operators.
	
	Assume without loss of generality that $f_i \geq 0$. Define
	\[
		\Omega_i = \{x\in P_0: \mathcal{M}^d f_i(x) > \lambda^{1/k}\}.
	\]
	If $\langle f_i \rangle_{P_0} > \lambda^{1/k}$ then by the homogeneity assumption
	\[
		|P_0| < \lambda^{-1/k}
	\]
	and the estimate follows. Therefore, we can assume $\langle f_i \rangle_{P_0} \leq \lambda^{1/k}$ for all $1 \leq i \leq k$ and hence we 
	can write $\Omega_i$ as a union the cubes in a collection $\mathcal{R}_i$ consisting of pairwise disjoint dyadic (strict) subcubes of $P_0$ with the property
	\[
		\langle f_i \rangle_R > \lambda^{1/k} \quad \text{and} \quad \langle f_i \rangle_{R^{(1)}} \leq \lambda^{1/k}.
	\]
	
	For each $1 \leq i \leq k$ let $b_i = \sum_{R \in \mathcal{R}_i} b_i^R$, where
	\[
		b_i^R(x) := \bigl( f_i(x) - \langle f_i \rangle_R \bigr) \mathbbm{1}_R(x).
	\]
	We now let $g_i = f_i-b_i$.
	
	Observe that we have
	\[
		|g_i(x)| \leq 2^d \lambda^{1/k},
	\]
	as well as
	\[
		|\Omega_i| = \sum_{R \in \mathcal{R}_i} |R| \leq \lambda^{-1/k}.
	\]
	
	Define $\Omega = \cup_{i=1}^k \Omega_i$, then we have
	\begin{align}
		|\{x \in P_0: \, \mathcal{A}_{P_0,\alpha}^m \vec{f}(x) > \lambda\}| &\leq |\Omega| + 
			|\{x\in P_0\setminus \Omega: \, \mathcal{A}_{P_0,\alpha}^m \vec{f}(x) > \lambda\}| \notag \\
		&\leq k \lambda^{-1/k} + |\{x\in P_0\setminus \Omega: \, \mathcal{A}_{P_0,\alpha}^m \vec{f}(x) > \lambda\}|. \label{Complexity.LebesgueWeakType.eq1}
	\end{align}
	
	To estimate the second term observe that
	\begin{align*}
		\mathcal{A}_{P_0,\alpha}^m \vec{f}(x) &= \mathcal{A}_{P_0,\alpha}^m (\vec{g} + \vec{b})(x) \\
		&= \mathcal{A}_{P_0,\alpha}^m \vec{g}(x) + \sum_{j=1}^{2^k-1} \mathcal{A}_{P_0,\alpha}^m(h_1^j, \dots, h_k^j)(x),
	\end{align*}
	where the functions $h_i^j$ are either $g_i$ or $b_i$ and, furthermore, for each $1 \leq j \leq 2^k-1$ there is at least one $1 \leq i \leq k$
	such that $h_i^j = b_i$.
	
	Fix $j$ and let $i_j$ be such that $h_{i_j}^j = b_{i_j}$, then
	\begin{align*}
		\mathcal{A}_{P_0}^m (h_1^j, h_2^j, \dots, h_{i_j}^j, \dots, h_k^j)(x) &= \sum_{Q \in \mathscr{D}_{\geq m}(P_0)}
			\alpha_Q \Bigl( \prod_{i=1}^k \langle h_i^j \rangle_{Q^{(m)}} \Bigr) \mathbbm{1}_Q(x) \\
		&= \sum_{Q \in \mathscr{D}_{\geq m}(P_0)}
			\alpha_Q \langle b_{i_j} \rangle_{Q^{(m)}}\Bigl( \prod_{1 \leq i \leq k, \, i \neq i_j} \langle h_i^j \rangle_{Q^{(m)}} \Bigr) \mathbbm{1}_Q(x) \\
		&= \sum_{R \in \mathcal{R}_{i_j}} \sum_{Q \in \mathscr{D}_{\geq m}(P_0)}
			\alpha_Q \langle b_{i_j}^R \rangle_{Q^{(m)}}\Bigl( \prod_{1 \leq i \leq k, \, i \neq i_j} \langle h_i^j \rangle_{Q^{(m)}} \Bigr) \mathbbm{1}_Q(x) \\
		&= \sum_{R \in \mathcal{R}_{i_j}} \sum_{Q \in \mathscr{D}_{> m}(R)}
			\alpha_Q \langle b_{i_j}^R \rangle_{Q^{(m)}}\Bigl( \prod_{1 \leq i \leq k, \, i \neq i_j} \langle h_i^j \rangle_{Q^{(m)}} \Bigr) \mathbbm{1}_Q(x).
	\end{align*}
	So we deduce that $\mathcal{A}_{P_0,\alpha}^m (h_1^j, \dots, h_k^j)(x) = 0$ for all $x \notin \Omega_{i_j}$. With this fact we can see that the
	second term in \eqref{Complexity.LebesgueWeakType.eq1} is actually identical to
	\[
		|\{x\in P_0\setminus \Omega: \, \mathcal{A}_{P_0,\alpha}^m \vec{g}(x) > \lambda\}|.
	\]
	
	Now we can use the $L^2$ bound as follows:
	\begin{align*}
		|\{x\in P_0\setminus \Omega: \, \mathcal{A}_{P_0,\alpha}^m \vec{g}(x) > \lambda\}| &\leq \frac{1}{\lambda^2} \|\mathcal{A}_{P_0,\alpha}^m \vec{g}\|_{L^2(P_0)}^2 \\
		&\leq \frac{16}{\lambda^2} \prod_{i=1}^k \|g_i\|_{L^{2k}(P_0)}^{2} \\
		&\leq \frac{16}{\lambda^2} \prod_{i=1}^k \bigl( 2^d \lambda^{1/k} \bigr)^{\frac{2k-1}{k}} \|g_i\|_{L^1(P_0)}^{1/k} \\
		&= \frac{16}{\lambda^2} 2^{d(2k-1)} \lambda^{2-1/k} \\
		&= 2^{4+ d(2k-1)}\lambda^{-1/k}.
	\end{align*}
	
	Putting both estimates together we arrive at
	\[
		|\{x \in P_0: \, \mathcal{A}_{P_0,\alpha}^m \vec{f}(x) > \lambda\}| \leq 2^{5+ d(2k-1)}\lambda^{-1/k}
	\]
	which yields the result with $C_{W} = 2^{k(5+ d(2k-1))}$.
\end{proof}

\bibliography{bibliography}{}

\begin{thebibliography}{10}

\bibitem{Bennett1988}
C.~Bennett and R.~Sharpley.
\newblock {\em Interpolation of operators}, volume 129 of {\em Pure and Applied
  Mathematics}.
\newblock Academic Press Inc., Boston, MA, 1988.

\bibitem{Chen2013}
W.~Chen and W.~Dami{\'a}n.
\newblock Weighted estimates for the multisublinear maximal function.
\newblock {\em Rend. Circ. Mat. Palermo (2)}, 62(3):379--391, 2013.

\bibitem{Cruz-Uribe2010}
D.~Cruz-Uribe, J.~M. Martell, and C.~P{\'e}rez.
\newblock Sharp weighted estimates for approximating dyadic operators.
\newblock {\em Electron. Res. Announc. Math. Sci.}, 17:12--19, 2010.

\bibitem{Cruz-Uribe2012}
D.~Cruz-Uribe, J.~M. Martell, and C.~P{\'e}rez.
\newblock Sharp weighted estimates for classical operators.
\newblock {\em Adv. Math.}, 229(1):408--441, 2012.

\bibitem{Damian2012}
W.~Dami\'an, A.~K. Lerner, and C.~P\'erez.
\newblock Sharp weighted bounds for multilinear maximal functions and
  {Calderón-Zygmund} operators.
\newblock {\em arXiv:1211.5115}, 2012.

\bibitem{Grafakos2002}
L.~Grafakos and R.~H. Torres.
\newblock Multilinear {C}alder\'on-{Z}ygmund theory.
\newblock {\em Adv. Math.}, 165(1):124--164, 2002.

\bibitem{Hytonen2012c}
T.~Hyt{\"o}nen.
\newblock ${A_2}$ theorem: remarks and complements.
\newblock {\em preprint}, 2012.

\bibitem{Hytonen2012a}
T.~P. Hyt{\"o}nen.
\newblock The sharp weighted bound for general {C}alder\'on-{Z}ygmund
  operators.
\newblock {\em Ann. of Math. (2)}, 175(3):1473--1506, 2012.

\bibitem{Hytonen2013}
T.~P. Hyt{\"o}nen, M.~T. Lacey, and C.~P{\'e}rez.
\newblock Sharp weighted bounds for the {$q$}-variation of singular integrals.
\newblock {\em Bull. Lond. Math. Soc.}, 45(3):529--540, 2013.

\bibitem{Lacey2009}
M.~T. {Lacey}, E.~T. {Sawyer}, and I.~{Uriarte-Tuero}.
\newblock {Two Weight Inequalities for Discrete Positive Operators}.
\newblock {\em ArXiv e-prints}, Nov. 2009.

\bibitem{Lacey2012}
M.~T. {Lacey} and J.~{Scurry}.
\newblock {Weighted Weak Type Estimates for Square Functions}.
\newblock {\em ArXiv e-prints}, Nov. 2012.

\bibitem{Lerner2010}
A.~K. Lerner.
\newblock A pointwise estimate for the local sharp maximal function with
  applications to singular integrals.
\newblock {\em Bull. Lond. Math. Soc.}, 42(5):843--856, 2010.

\bibitem{Lerner2013a}
A.~K. Lerner.
\newblock On an estimate of {C}alder\'on-{Z}ygmund operators by dyadic positive
  operators.
\newblock {\em J. Anal. Math.}, 121:141--161, 2013.

\bibitem{Lerner2013}
A.~K. Lerner.
\newblock A simple proof of the {$A_2$} conjecture.
\newblock {\em Int. Math. Res. Not. IMRN}, (14):3159--3170, 2013.

\bibitem{Lerner2014}
A.~K. Lerner.
\newblock On sharp aperture-weighted estimates for square functions.
\newblock {\em J. Fourier Anal. Appl.}, 20(4):784--800, 2014.

\bibitem{LernerNazarov}
A.~K. Lerner and F.~Nazarov.
\newblock Personal communication.

\bibitem{Lerner2009}
A.~K. Lerner, S.~Ombrosi, and C.~P{\'e}rez.
\newblock {$A_1$} bounds for {C}alder\'on-{Z}ygmund operators related to a
  problem of {M}uckenhoupt and {W}heeden.
\newblock {\em Math. Res. Lett.}, 16(1):149--156, 2009.

\bibitem{Lerner2009a}
A.~K. Lerner, S.~Ombrosi, C.~P{\'e}rez, R.~H. Torres, and
  R.~Trujillo-Gonz{\'a}lez.
\newblock New maximal functions and multiple weights for the multilinear
  {C}alder\'on-{Z}ygmund theory.
\newblock {\em Adv. Math.}, 220(4):1222--1264, 2009.

\bibitem{Li2014}
K.~Li, K.~Moen, and W.~Sun.
\newblock The sharp weighted bound for multilinear maximal functions and
  {C}alder\'on-{Z}ygmund operators.
\newblock {\em J. Fourier Anal. Appl.}, 20(4):751--765, 2014.

\bibitem{NRVV-WeightedHaarShifts}
F.~Nazarov, A.~Reznikov, V.~Vasyunin, and A.~Volberg.
\newblock ${A}_1$ conjecture: Weak norm estimates of weighted singular integral
  operators and {B}ellman functions.
\newblock (Preprint), 2013.

\bibitem{Petermichl2007}
S.~Petermichl.
\newblock The sharp bound for the {H}ilbert transform on weighted {L}ebesgue
  spaces in terms of the classical {$A_p$} characteristic.
\newblock {\em Amer. J. Math.}, 129(5):1355--1375, 2007.

\end{thebibliography}
\bibliographystyle{abbrv}

\end{document}